\documentclass[oneside,english,reqno]{amsart}
\usepackage[T1]{fontenc}
\usepackage[latin9]{inputenc}
\setcounter{tocdepth}{1}
\usepackage{babel}
\usepackage{amsthm}
\usepackage{amssymb}
\usepackage{graphicx}
\usepackage[unicode=true,pdfusetitle,
 bookmarks=true,bookmarksnumbered=false,bookmarksopen=false,
 breaklinks=false,pdfborder={0 0 1},backref=false,colorlinks=false]
 {hyperref}
\usepackage{breakurl}

\makeatletter

\providecommand{\tabularnewline}{\\}

\theoremstyle{plain}
\newtheorem{thm}{\protect\theoremname}[section]
  \theoremstyle{definition}
  \newtheorem{defn}[thm]{\protect\definitionname}
  \theoremstyle{plain}
  \newtheorem{prop}[thm]{\protect\propositionname}
  \theoremstyle{plain}
  \newtheorem{lem}[thm]{\protect\lemmaname}
  \theoremstyle{definition}
  \newtheorem{example}[thm]{\protect\examplename}
  \theoremstyle{plain}
  \newtheorem{algorithm}[thm]{\protect\algorithmname}
  \theoremstyle{plain}
  \newtheorem{assumption}[thm]{\protect\assumptionname}
  \theoremstyle{remark}
  \newtheorem{rem}[thm]{\protect\remarkname}
  \theoremstyle{remark}
  \newtheorem*{acknowledgement*}{\protect\acknowledgementname}


\newcommand{\lev}{\mbox{\rm lev}}

\newcommand{\diam}{\mbox{\rm diam}}

\numberwithin{equation}{section}
\numberwithin{figure}{section}
\title[Principles for mountain pass algorithms, and the parallel distance]{Some principles for mountain pass algorithms, and the parallel distance}

\makeatother

  \providecommand{\acknowledgementname}{Acknowledgement}
  \providecommand{\algorithmname}{Algorithm}
  \providecommand{\assumptionname}{Assumption}
  \providecommand{\definitionname}{Definition}
  \providecommand{\examplename}{Example}
  \providecommand{\lemmaname}{Lemma}
  \providecommand{\propositionname}{Proposition}
  \providecommand{\remarkname}{Remark}
\providecommand{\theoremname}{Theorem}

\begin{document}

\date{\today{}}

\author{Justin T. Brereton }

\address{Department of Mathematics, Massachusetts Institute of Technology,
Cambridge, MA, USA}

\email{jbrere@mit.edu}

\author{C.H. Jeffrey Pang}

\address{Department of Mathematics, National University of Singapore, Block
S17 05-10, 10 Lower Kent Ridge Road, Singapore 119076 }

\email{matpchj@nus.edu.sg}
\begin{abstract}
The problem of computing saddle points is important in certain problems
in numerical partial differential equations and computational chemistry,
and is often solved numerically by a minimization problem over a set
of mountain passes. We point out that a good global mountain pass
algorithm should have good local and global properties. Next, we define
the parallel distance, and show that the square of the parallel distance
has a quadratic property. We show how to design algorithms for the
mountain pass problem based on perturbing parameters of the parallel
distance, and that methods based on the parallel distance have midrange
local and global properties.
\end{abstract}
\maketitle
\tableofcontents{}

\section{Introduction}

We begin with the definition of a mountain pass.
\begin{defn}
(Mountain pass) \label{def:mountain-pass}Let $X$ be a topological
space, and consider $a,b\in X$. Let $\Gamma(a,b)$ be the set of
continuous paths $p:[0,1]\rightarrow X$ such that $p(0)=a$ and $p(1)=b$.
For a function $f:X\rightarrow\mathbb{R}$, define an \emph{optimal
mountain pass} $\bar{p}\in\Gamma(a,b)$ to be a minimizer of the problem
\begin{equation}
\inf_{p\in\Gamma(a,b)}\sup_{0\leq t\leq1}f\circ p(t).\label{eq:mtn-pass}
\end{equation}

The point $\bar{x}$ is a \emph{critical point }if $\nabla f(\bar{x})=0$,
and the critical point $\bar{x}$ is a \emph{saddle point }if it is
not a local maximizer or minimizer on $X$. The value $f(x)$ is a\emph{
critical value }if $x$ is a critical point. We say that $\bar{x}$
is a \emph{saddle point of mountain pass type }if there is an open
set $U$ containing $\bar{x}$ such that $\bar{x}$ lies in the closure
of two path connected components of $\{x\in U:f(x)<f(\bar{x})\}$.
In the case where $f$ is smooth and an optimal mountain pass $\bar{p}:[0,1]\to X$
exists, the maximum of $f$ on $\bar{p}([0,1])$ is a\emph{ }saddle
point.
\end{defn}
In this paper, we shall focus on the case where $X=\mathbb{R}^{n}$
and the saddle point is nondegenerate. A saddle point $\bar{x}$ is
said to be \emph{nondegenerate} if $\nabla^{2}f(\bar{x})$ is invertible.
Moreover, a nondegenerate saddle point $\bar{x}$ has \emph{Morse
index one }if $\nabla^{2}f(\bar{x})$ contains exactly one negative
eigenvalue.

The problem of finding saddle points numerically is important in
the problem of finding weak solutions to partial differential equations
numerically. The first critical point existence theorems now known
as the mountain pass theorems were proved in \cite{AR73,Rab77}. Some
recent theoretical references include \cite{MW89,Rabino86,Schechter99,Struwe08,Willem96}.
See also the more accessible reference \cite{Jabri03}. The original
paper of a mountain pass algorithm to solve partial differential equations
is \cite{CM93}, and it contains several semilinear elliptic problems.
Particular applications in numerical partial differential equations
include finding periodic solutions of a boundary value problem modeling
a suspension bridge \cite{Feng94} (introduced by \cite{LM91}), studying
a system of Ginzburg-Landau type equations arising in the thin film
model of superconductivity \cite{GM08}, the choreographical 3-body
problem \cite{ABT06}, and cylinder buckling \cite{HLP06}. Other
notable works in computing saddle points for solving numerical partial
differential equations include the use of constrained optimization
\cite{Hor04}, extending the mountain pass algorithm to find saddle
points of higher Morse index \cite{DCC99,LZ01}, extending the mountain
pass algorithm to find nonsmooth saddle points \cite{YZ05}, and using
symmetry \cite{WZ04,WZ05}.

The problem of finding saddle points numerically is by now well entrenched
in the chemistry curriculum. In transition state theory, the problem
of finding the least amount of energy to transition between two stable
states is equivalent to finding an optimal mountain pass between these
two stable states. The highest point on the optimal mountain pass
can then be used to determine the reaction kinetics. The foundations
of transition state theory was laid by Marcelin, and important work
by Eyring and Polanyi in 1931 and by Pelzer and Wigner a year later
established the importance of saddle points in transition state theory.
We cite the Wikipedia entry on transition state theory for more on
its history and further references. Numerous methods for computing
saddle points were suggested through the years, and we refer to the
surveys \cite{HJJ00,Schlegel05,Schlegel11,Wales06} as well as the
recent text \cite{Wales03}. A software for computing saddle points
in chemistry is Gaussian%
\footnote{\href{http://www.gaussian.com/}{http://www.gaussian.com/}%
}. Tools for computing transition states%
\footnote{\href{http://theory.cm.utexas.edu/vtsttools/neb/}{http://theory.cm.utexas.edu/vtsttools/neb/}%
} are also included in VASP%
\footnote{\href{http://cms.mpi.univie.ac.at/vasp/vasp/vasp.html}{http://cms.mpi.univie.ac.at/vasp/vasp/vasp.html}%
}. Though the entire optimal mountain pass is needed for such an application,
the process of computing saddle points often gives hints on an optimal
mountain pass. 

As mentioned in \cite{mountain}, our initial interest in the problem
of computing saddle points of mountain pass type comes from computing
the distance of a matrix $A\in\mathbb{C}^{n\times n}$ to the closest
matrix with repeated eigenvalues (also known as the Wilkinson distance
problem). 

\begin{figure}[h]
\begin{tabular}{|c|c|}
\hline 
\includegraphics[scale=0.4]{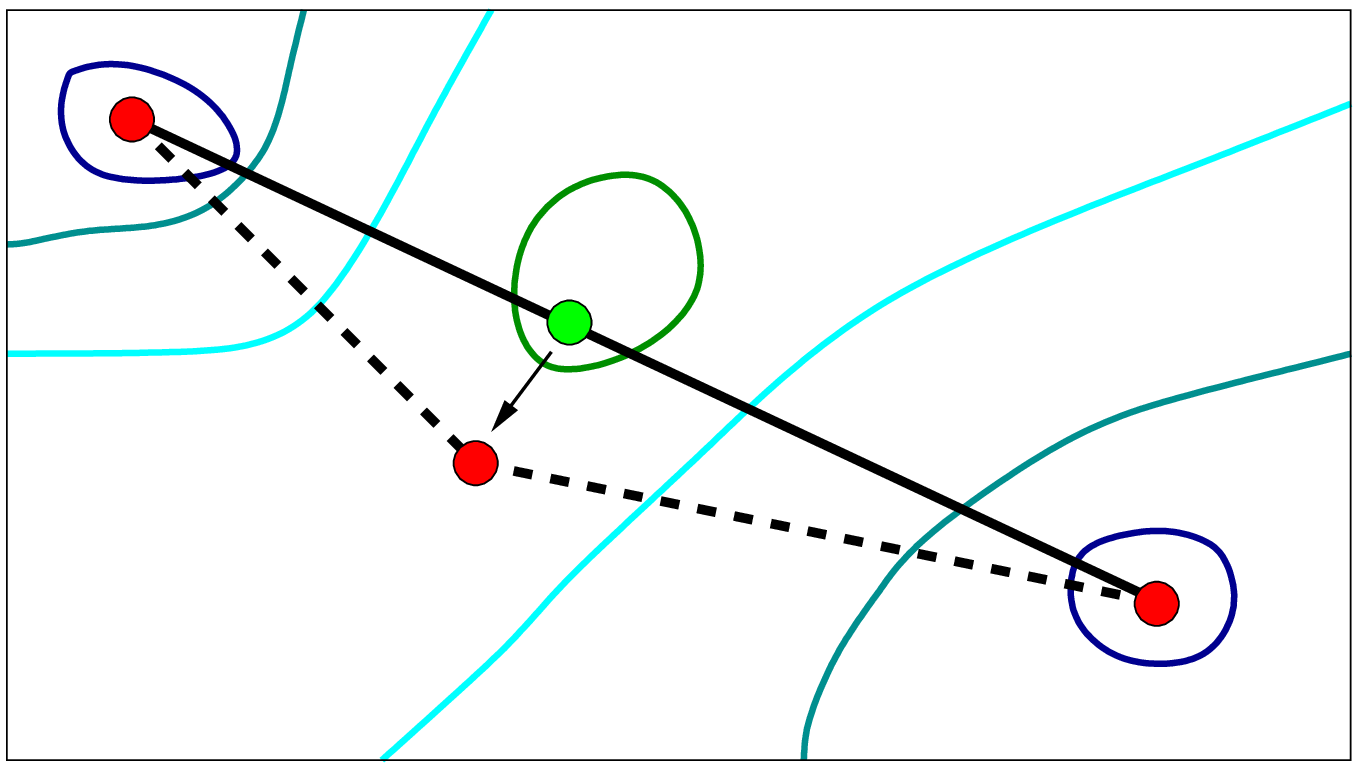} & \includegraphics[scale=0.3]{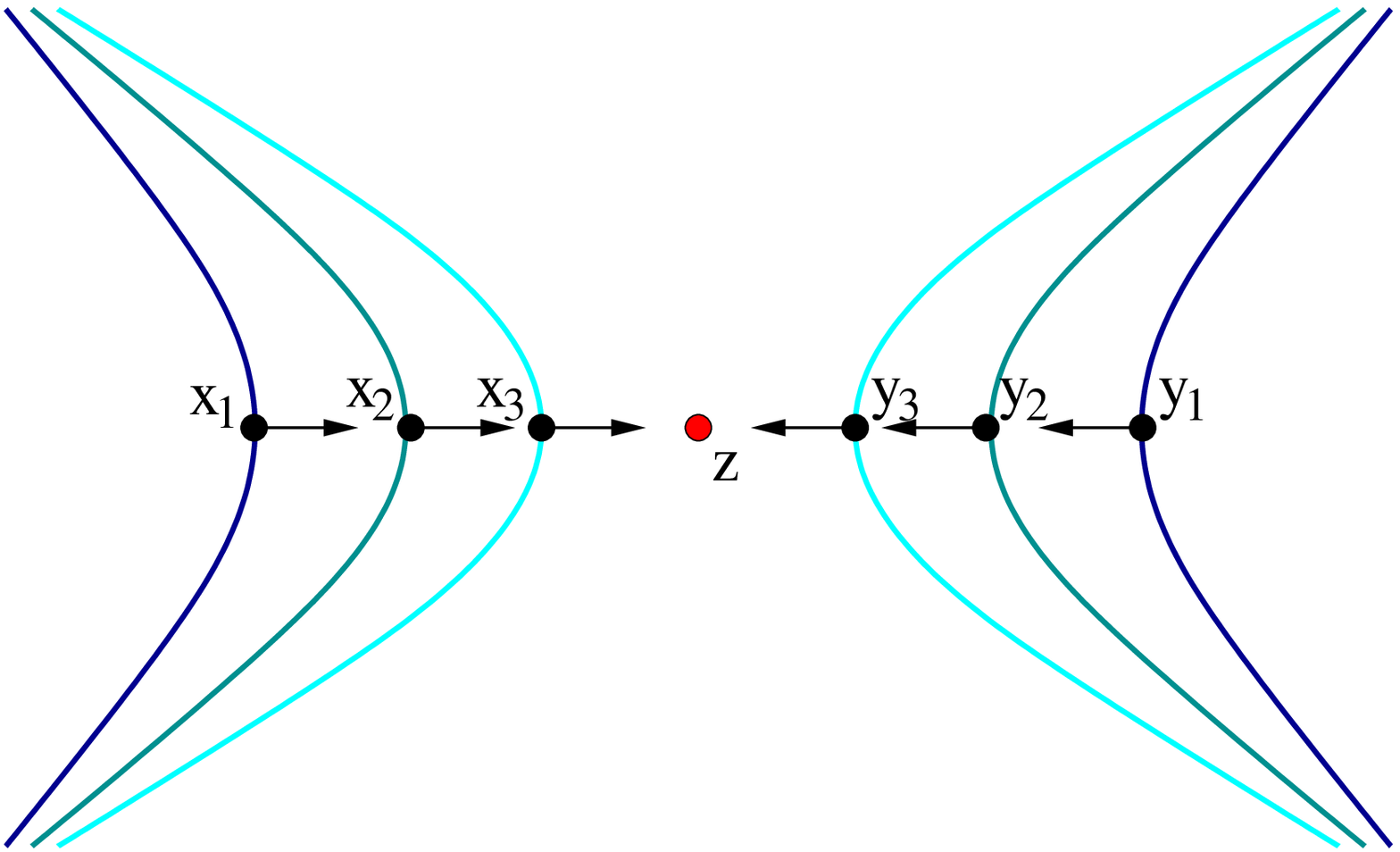}\tabularnewline
\hline 
\end{tabular}

\caption{\label{fig:mtn-contrast}The diagram on the left illustrates a path-based
method, while the diagram on the right illustrates a level set method,
as was done in \cite{mountain} and this paper.}
\end{figure}

We recall three broad methods for computing the mountain pass:

\subsection*{Path-based methods}

The typical mountain pass algorithm makes use of the formula in \eqref{eq:mtn-pass}
to find a saddle point. The paths in $\Gamma(a,b)$ are discretized,
and perturbed so that the maximum value of $f$ along the path is
reduced. The point on an optimizing path attaining the maximum value
is a good estimate of the critical point. See Figure \ref{fig:mtn-contrast}.

\subsection*{Quadratic model methods}

Once the iterates are close enough to the saddle point $\bar{x}$,
the quadratic expansion 
\begin{equation}
f(x)=\frac{1}{2}(x-\bar{x})^{T}\nabla^{2}f(\bar{x})(x-\bar{x})+\nabla f(\bar{x})^{T}(x-\bar{x})+f(\bar{x})+o(\|x-\bar{x}\|^{2})\label{eq:quad-approx}
\end{equation}
can form the basis of algorithms that converge quickly to the saddle
point. A Newton method can achieve quadratic convergence to the saddle
point, or its variants can achieve fast convergence. The gradient
$\nabla f(x)$ has close to linear behavior, and other methods involving
solving the linear system are also possible.

\subsection*{Level set methods}

In \cite{mountain,MironFichthorn}, a different strategy of using
\emph{level sets} 
\[
\lev_{\leq l}f:=\{x:f(x)\leq l\}
\]
 is suggested: For a neighborhood $U$ of the critical point $\bar{x}$
and an increasing sequence of $l_{i}$ converging to the critical
value $f(\bar{x})$, find the closest points in different components
of $U\cap\lev_{\leq l_{i}}f$, say $x_{i}$ and $y_{i}$. Figure \ref{fig:mtn-contrast}
contrasts path-based methods and level set methods. Under additional
conditions, $\{x_{i}\}_{i=1}^{\infty}$ and $\{y_{i}\}_{i=1}^{\infty}$
both converge to $\bar{x}$. An optimal mountain pass can be estimated
from the iterates $\{x_{i}\}_{i=1}^{\infty}$ and $\{y_{i}\}_{i=1}^{\infty}$.
Advantages of level set methods over path-based methods include: 
\begin{itemize}
\item [(A1)]The level set method needs only to keep track of two points
at each step instead of an entire path.
\item [(A2)]The bulk of computations are performed near the saddle point. 
\item [(A3)]The distance between the components of the level set indicate
the performance of the algorithm.
\item [(A4)]Provided black boxes for finding closest points to components
of the level set and for the minimization of the function $f$ on
an affine space exist, an algorithm locally superlinearly convergent
to the critical point is described in \cite{mountain}. See also (D1)
in Section \ref{sec:global-framework}.
\end{itemize}
However, here are some difficulties encountered in the level set algorithm
in \cite{mountain}, which we will elaborate in Section \ref{sec:global-framework}.

One contribution we make in this paper is to identify properties desirable
for a global mountain pass algorithm. Specifically, we propose these
two principles:
\begin{enumerate}
\item [(P1)]Suppose $f\in\mathcal{C}^{2}$. Once the iterates are close
enough to a nondegenerate saddle point of Morse index one, the algorithm
should converge quickly to the saddle point $\bar{x}$.
\item [(P2)]The global algorithm should find a saddle point of mountain
pass type.
\end{enumerate}
The analogy to Principle (P2) in optimization is to seek decrease
so that iterates converge to a local minimizer. Principle (P1) states
that the algorithm should have fast convergence once close enough
to a saddle point. Related to Principle (P1) is Principle (P1$^{\prime}$)
below.
\begin{enumerate}
\item [(P1$^{\prime}$)]For the quadratic $f(x)=\frac{1}{2}x^{T}Hx+g^{T}x+c$,
where $H$ is an invertible symmetric matrix with one negative eigenvalue
and $n-1$ positive eigenvalues, the algorithm should have excellent
convergence.
\end{enumerate}
We make a short summary of the performance of the various mountain
pass algorithms. Path-based methods excel in (P2) due to the proof
of the mountain pass theorem of \cite{AR73} using the Ekeland variational
principle. More specifically, under suitable conditions, if $p_{i}(\cdot)$
is a sequence of paths in $\Gamma(a,b)$ such that $\max_{t\in[0,1]}f\circ p_{i}$
converges to the critical level, then the sequence of maximizers of
$f$ along the path $p_{i}(\cdot)$ converge to a saddle point. However,
it does poorly for (P1) and (P1$^{\prime}$) because it does not take
advantage of the quadratic approximation \eqref{eq:quad-approx} to
achieve fast convergence. On the other hand, methods that make extensive
use of the quadratic approximation \eqref{eq:quad-approx} excel for
(P1) and (P1$^{\prime}$), but does not satisfy (P2) because the quadratic
approximation need not be valid globally. 

Another contribution of this paper is to argue that level set methods
should be part of a good mountain pass algorithm because it does well
for the Principles (P1), (P1$^{\prime}$) and (P2).

 We also show how the parallel distance defined below can be
part of a good mountain pass algorithm. For a set $C\subset\mathbb{R}^{n}$,
its \emph{diameter} $\diam(C)$ is defined by $\diam(C):=\sup\{|x-y|:x,y\in C\}$.
\begin{defn}
\label{def:glv}(Parallel distance) Let $f:\mathbb{R}^{n}\to\mathbb{R}$
be $\mathcal{C}^{2}$ in a convex neighborhood $U^{\prime}$, and
let $v$ be a unit vector. See Figure \ref{fig:glv}. Consider the
set $S_{l,v}(x)\subset\mathbb{R}^{n}$ defined by 
\[
S_{l,v}(x):=U^{\prime}\cap[\{x\}+\mathbb{R}\{v\}]\cap\lev_{\geq l}f.
\]
For a neighborhood $U$ of $\bar{x}$ such that $U\subset U^{\prime}$,
define the \emph{parallel distance }$g_{l,v}:U\to\mathbb{R}$ by 
\[
g_{l,v}(x):=\diam\big(S_{l,v}(x)\big).
\]
When $S_{l,v}(x)=\emptyset$, $g_{l,v}(x)=0$. In the case where $S_{l,v}(x)$
is a line segment, we can write $g_{l,v}(x)$ as 
\begin{equation}
g_{l,v}(x)=g_{l,v,1}(x)+g_{l,v,2}(x),\label{eq:g-diff-formula}
\end{equation}
where\begin{subequations} 
\begin{eqnarray}
g_{l,v,1}(x) & = & \max\{\phantom{-}v^{T}z^{\phantom{\prime}}\mid f(z^{\phantom{\prime}})=l,\, z^{\phantom{\prime}}\in S_{l,v}(x)\},\label{eq:glv1}\\
\mbox{ and }g_{l,v,2}(x) & = & \max\{-v^{T}z^{\prime}\mid f(z^{\prime})=l,\, z^{\prime}\in S_{l,v}(x)\}.\label{eq:glv2}
\end{eqnarray}
\end{subequations}Also, define $z(x)$ and $z^{\prime}(x)$ as
\begin{eqnarray*}
z^{\phantom{\prime}}(x) & = & \arg\max\{\phantom{-}v^{T}z^{\phantom{\prime}}\mid f(z^{\phantom{\prime}})=l,\, z^{\phantom{\prime}}\in S_{l,v}(x)\},\\
\mbox{ and }z^{\prime}(x) & = & \arg\max\{-v^{T}z^{\prime}\mid f(z^{\prime})=l,\, z^{\prime}\in S_{l,v}(x)\}.
\end{eqnarray*}

\end{defn}
\begin{figure}
\includegraphics[scale=0.4]{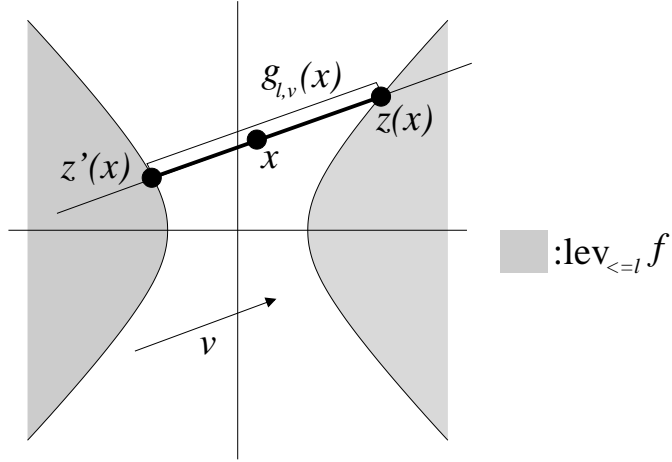}\caption{\label{fig:glv}Illustration of $g_{l,v}(x)$ in Definition \ref{def:glv}. }
\end{figure}

One step of the mountain pass algorithm in \cite{mountain} is to
find the closest points between components of the level sets. The
problem of finding the closest points between two sets is not necessarily
easy, and an alternating projection algorithm converges slowly once
close to the optimum points. We will show that as long as $v$ is
close enough to the eigenvector corresponding to the negative eigenvalue
of the Hessian of the saddle point, the square of the parallel distance
satisfies property (P1). This allows us to get around the problem
of finding the closest points between components of the level sets.

\subsection{Outline of paper}

Section \ref{sec:Basic-parallel} discusses various basic properties
of the parallel distance. The topics discussed are: how the square
of the parallel distance satisfies (P1$^{\prime}$), formulas for
the gradient and Hessian of the parallel distance $g_{l,v}(\cdot)$
and its square $g_{l,v}(\cdot)^{2}$, and why it is preferable to
consider $g_{l,v}(\cdot)^{2}$ for the smooth problem instead of $g_{l,v}(\cdot)$.
Section \ref{sec:global-framework} proposes subroutines for a mountain
pass algorithm, and discusses how to use these subroutines to design
a mountain pass algorithm with midrange local and global properies.
Section \ref{sec:Convexity-para-dist} shows that the Hessian $\nabla^{2}(g_{l,v}^{2})(\cdot)$
is close to the Hessian as predicted by a quadratic model. This shows
that the Hessian $\nabla^{2}(g_{l,v}^{2})(\cdot)$ is not sensitive
to $l$ as the computations get close to the saddle point, making
old estimates of $\nabla^{2}(g_{l,v}^{2})(\cdot)$ useful for future
computations involving a different $l$. Section \ref{sec:implement}
shows how our algorithm performs in an implementation.

\section{\label{sec:Basic-parallel}Basic properties of the parallel distance}

In this section, we study basic properties of the parallel distance
function.

When $f$ is an exact quadratic whose critical point is nondegenerate
of Morse index one, we have the following appealing result. 
\begin{prop}
\label{pro:quad-formula}(Quadratic formula for square of parallel
distance in exact quadratic) Suppose that $f:\mathbb{R}^{n}\to\mathbb{R}$
is an exact quadratic $f(x)=\frac{1}{2}x^{T}Hx+g^{T}x+c$, with $H\in\mathbb{R}^{n\times n}$
having $n-1$ positive eigenvalues and one negative eigenvalue. Consider
a unit vector $v$ such that $v^{T}Hv<0$. Then $S_{l,v}(x)$ is a
line segment, and the function $g_{l,v}(\cdot)$ takes the form \eqref{eq:g-diff-formula}.
Additionally, we have 
\begin{align}
g_{l,v}(x)^{2} & =\max\Big\{0,\frac{4}{(v^{T}Hv)^{2}}\Big[x^{T}[Hvv^{T}H-(v^{T}Hv)H]x\nonumber \\
 & \qquad\qquad\qquad\qquad\qquad+2[(g^{T}v)v^{T}H-(v^{T}Hv)g^{T}]x\nonumber \\
 & \qquad\qquad\qquad\qquad\qquad+\big[(g^{T}v)^{2}+(v^{T}Hv)[-2c+2l]\big]\Big]\Big\}.\label{eq:g-l-v-full}
\end{align}
For $g_{l,v}(x)>0$ and $v$ sufficiently close to the eigenvector
corresponding to the negative eigenvalue of \textup{$H$}, the matrix
$\nabla^{2}(g_{l,v}^{2})(x)$ has $n-1$ positive eigenvalues and
one zero eigenvalue. The function $g_{l,v}^{2}$ is convex. Moreover,
let $\bar{x}$ be the saddle point $-H^{-1}g$. If $l<f(\bar{x})$
and $g_{l,v}^{2}$ has a minimizer $\tilde{x}$, then $\bar{x}$ is
the midpoint of the intersection of the line $\{\tilde{x}\}+\mathbb{R}\{v\}$
and $\lev_{=l}f$.  \end{prop}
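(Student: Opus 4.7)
The plan is to reduce the entire analysis to a one-dimensional study of $f$ restricted to the line $\{x\}+\mathbb{R}v$. Parametrize points on this line as $t \mapsto x + tv$, so that
\[
f(x+tv) = \frac{1}{2}(v^THv)\,t^2 + (v^THx + g^Tv)\,t + f(x),
\]
a downward-opening parabola in $t$ since $v^THv<0$. Hence $\{t : f(x+tv) \geq l\}$ is either empty or a closed interval, giving that $S_{l,v}(x)$ is a line segment whenever nonempty and that the split \eqref{eq:g-diff-formula} applies. The roots $t_\pm$ of $f(x+tv)=l$ are given by the quadratic formula, and $g_{l,v}(x) = t_+ - t_-$, so $g_{l,v}(x)^2$ equals $4/(v^THv)^2$ times the discriminant. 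Expanding the discriminant and grouping by powers of $x$ yields \eqref{eq:g-l-v-full} directly. This covers the first three claims.

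For the Hessian analysis, write the bracketed quadratic inside $\max\{0,\cdot\}$ in \eqref{eq:g-l-v-full} as $q(x)$; its Hessian is constant, equal to $\frac{8}{(v^THv)^2}M$ with $M := Hvv^TH - (v^THv)H$. A one-line computation gives $Mv = (v^THv)Hv - (v^THv)Hv = 0$, so $v \in \ker M$ for every $v$. When $v$ coincides with the unit eigenvector $u$ of $H$ for the negative eigenvalue $\lambda_1$, diagonalizing $M$ in the orthogonal eigenbasis of $H$ shows the remaining eigenvalues of $M$ are $-\lambda_1 \lambda_i > 0$, where $\lambda_i > 0$ are the positive eigenvalues of $H$. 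By continuity of eigenvalues in the parameter $v$, for $v$ sufficiently close to $u$ the $n-1$ nonzero eigenvalues of $M$ remain strictly positive while the zero eigenvalue persists identically. Convexity of $g_{l,v}^2$ then follows because $q$ is convex (positive semidefinite Hessian) and $\max\{0,q\}$ is a maximum of two convex functions.

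For the midpoint claim, specialize to $x = \bar{x}$: since $H\bar{x}+g=0$, the parabola collapses to $f(\bar{x}+tv) = \frac{1}{2}(v^THv)t^2 + f(\bar{x})$, which is even in $t$. So $S_{l,v}(\bar{x})$ is symmetric about $\bar{x}$ and $g_{l,v}^2(\bar{x})>0$ because $l<f(\bar{x})$. A direct substitution verifies $\nabla q(\bar{x})=0$ (using $H\bar{x}+g=0$ to collapse the relevant brackets), so $\bar{x}$ is a critical point and, by convexity, a global minimizer of $g_{l,v}^2$. Because $\ker \nabla^2 q = \mathbb{R}v$ and the minimum value is strictly positive (so the truncation at $0$ does not create boundary minimizers), the minimizer set of $g_{l,v}^2$ is exactly $\bar{x}+\mathbb{R}v$. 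Therefore any minimizer $\tilde{x}$ satisfies $\{\tilde{x}\}+\mathbb{R}v = \{\bar{x}\}+\mathbb{R}v$, and the evenness above identifies $\bar{x}$ as the midpoint of the intersection with $\lev_{=l}f$.

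The main obstacle is the eigenvalue count for $M$, and my preferred route is the continuity/perturbation argument above, which is exactly matched to the hypothesis that $v$ is close to the negative-eigenvector of $H$. If one wanted a hypothesis-free version of this count, I would replace continuity with a reversed Cauchy--Schwarz argument in the Lorentzian form $(w_1,w_2)\mapsto w_1^THw_2$, which forces $\ker M = \mathbb{R}v$ and the remaining eigenvalues positive for every $v$ with $v^THv<0$. All other steps are routine computations with quadratic forms and first-order optimality.
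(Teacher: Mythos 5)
Your proof is correct and follows essentially the same route as the paper's: the explicit quadratic-formula computation along the line $\{x\}+\mathbb{R}\{v\}$ for \eqref{eq:g-l-v-full}, the observation that $v$ is always in the kernel of $Hvv^{T}H-(v^{T}Hv)H$ together with diagonalization at $v=\bar{v}$ and continuity of eigenvalues for the inertia count. You supply more detail than the paper on the convexity and midpoint claims (which the paper dismisses as "clear"/"easy"), and your remark that a reversed Cauchy--Schwarz argument in the Lorentzian form gives the positive semidefiniteness for every $v$ with $v^{T}Hv<0$ is a nice, correct strengthening, but the core argument is the same.
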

\begin{proof}
For the case of the quadratic $f$, the neighborhoods $U$ and $U^{\prime}$
can be taken to be $\mathbb{R}^{n}$.  The value $g_{l,v}(x)$ can
be computed as follows. At where $g_{l,v}(x)>0$, let $x+t_{i}v$,
where $t_{i}\in\mathbb{R}$ and $i=1,2$, be two points of intersection
of the line $\{x\}+\mathbb{R}\{v\}$ and the curve $\lev_{=l}f$.
The $t_{i}$'s can be calculated as follows:

\begin{eqnarray*}
\frac{1}{2}(x+t_{i}v)^{T}H(x+t_{i}v)+g^{T}(x+t_{i}v)+c & = & l\\
\Rightarrow(v^{T}Hv)t_{i}^{2}+[2(v^{T}Hx)+2g^{T}v]t_{i}+x^{T}Hx+2g^{T}x+2c-2l & = & 0.
\end{eqnarray*}
We have 
\[
t_{i}=\frac{-2(v^{T}Hx)-2g^{T}v\pm\sqrt{4(v^{T}Hx+g^{T}v)^{2}-4(v^{T}Hv)(x^{T}Hx+2g^{T}x+2c-2l)}}{2(v^{T}Hv)}.
\]
This gives 
\begin{eqnarray*}
g_{l,v}(x) & = & \frac{2}{v^{T}Hv}\sqrt{(v^{T}Hx+g^{T}v)^{2}-(v^{T}Hv)(x^{T}Hx+2g^{T}x+2c-2l)}\\
g_{l,v}(x)^{2} & = & \frac{4}{(v^{T}Hv)^{2}}[(v^{T}Hx+g^{T}v)^{2}-(v^{T}Hv)(x^{T}Hx+2g^{T}x+2c-2l)]\\
 & = & \frac{4}{(v^{T}Hv)^{2}}\Big[x^{T}[Hvv^{T}H-(v^{T}Hv)H]x+2[(g^{T}v)v^{T}H-(v^{T}Hv)g^{T}]x\\
 &  & \qquad\qquad\qquad+\big[(g^{T}v)^{2}+(v^{T}Hv)[-2c+2l]\big]\Big].
\end{eqnarray*}
Taking into account the fact that $g_{l,v}(x)$ can equal zero, $g_{l,v}(x)^{2}$
has the formula as given in \eqref{eq:g-l-v-full}. For the case when
$v=\bar{v}$, the eigenvector corresponding to the negative eigenvalue
of $H$, we find that $\bar{\ensuremath{v}}$ is the eigenvector corresponding
the zero eigenvalue for the Hessian 
\[
\nabla^{2}(g_{l,\bar{v}}^{2})(x)=\frac{8}{(v^{T}Hv)^{2}}[Hvv^{T}H-(v^{T}Hv)H].
\]
The other eigenvalues of $\nabla^{2}(g_{l,\bar{v}}^{2})(x)$ can easily
be calculated to be $-8\lambda_{i}/\lambda_{n}$ for $i=1,\dots,n-1$,
where $\lambda_{i}$s are the eigenvalues of $H$ arranged in decreasing
order. 

Note that $v$ is an eigenvector corresponding to eigenvalue zero
of $\nabla^{2}(g_{l,v}^{2})(x)$. Recall that the eigenvalues depend
continuously on the matrix entries. If the unit vector $v$ is sufficiently
close to $\bar{v}$, then the Hessian $\nabla^{2}(g_{l,v}^{2})(x)$
has one zero eigenvalue and $n-1$ positive eigenvalues. The convexity
of $g_{l,v}^{2}(\cdot)$ is clear. 

In the case where $l<f(\bar{x})$, it is easy to check that $x=-H^{-1}g$
is a minimizer of $g_{l,v}^{2}(\cdot)$. The other claims are easy.
\end{proof}
Proposition \ref{pro:quad-formula} says that when $f$ is quadratic,
then $g_{l,v}(\cdot)^{2}$ is also a quadratic, so a mountain pass
algorithm based on the parallel distance will satisfy (P1$^{\prime}$).

We next show that the parallel distance behaves well near the saddle
point $\bar{x}$ of Morse index one.
\begin{prop}
\label{pro:Well-defined-g}(Behavior near saddle point) Let $f:\mathbb{R}^{n}\to\mathbb{R}$
be $\mathcal{C}^{2}$ in a neighborhood of a nondegenerate saddle
point $\bar{x}$ of Morse index one, and $\bar{v}$ be the eigenvector
of unit length corresponding to the negative eigenvector of $\nabla^{2}f(\bar{x})$.
For $\delta>0$, define $\bar{f}_{\delta}:\mathbb{R}^{n}\to\mathbb{R}$
and $\bar{S}_{\delta,l,v}(x)$ by 
\begin{eqnarray}
\bar{f}_{\delta}(x) & := & \frac{1}{2}(x-\bar{x})^{T}[\nabla^{2}f(\bar{x})+\delta I](x-\bar{x})+f(\bar{x}),\label{eq:bar-f}\\
\mbox{ and }\bar{S}_{\delta,l,v}(x) & := & [\{x\}+\mathbb{R}\{v\}]\cap\lev_{\geq l}\bar{f}_{\delta}.\nonumber 
\end{eqnarray}
There is a neighborhood $U^{\prime}$ of $\bar{x}$ and $\epsilon>0$
such that:
\begin{enumerate}
\item $\|\nabla^{2}f(\bar{x})-\nabla^{2}f(x)\|<\delta$ for all $x\in U^{\prime}$. 
\item If $\|v-\bar{v}\|<\epsilon$, then the map $t\mapsto f(x+tv)$, where
$t\in\mathbb{R}$, is concave at wherever $x+tv\in U^{\prime}$. Hence
$S_{l,v}(x)$ is either a line segment or an empty set.
\item If $v$ is a unit vector satisfying $\|v-\bar{v}\|<\epsilon$ and
$|l-f(\bar{x})|<\epsilon$, then for all $x\in\mathbb{B}_{\epsilon}(\bar{x})$,
we have $S_{l,v}(x)\cap U^{\prime}\subset\bar{S}_{\delta,l,v}(x)\subset U^{\prime}$
 (which includes the case $S_{l,v}(x)=\emptyset$). 
\end{enumerate}
\end{prop}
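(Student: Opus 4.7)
The plan is to prove the three claims in order, handling each by reducing it to a continuity or quadratic-bound statement at $\bar x$.

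For claim (1), I would simply choose $U'$ to be any convex neighborhood of $\bar x$ (contained in the neighborhood where $f$ is $\mathcal C^2$) on which $\|\nabla^2 f(\bar x) - \nabla^2 f(\cdot)\| < \delta$; this is available by continuity of $\nabla^2 f$.

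For claim (2), I would compute $\tfrac{d^2}{dt^2} f(x+tv) = v^T \nabla^2 f(x+tv) v$. At $(x,v) = (\bar x, \bar v)$ this equals $\lambda_n < 0$, the negative eigenvalue. Using (1) and shrinking $U'$ if needed, $v^T \nabla^2 f(y) v$ depends continuously on $(y,v)$, so one picks $\epsilon$ small enough that for all $\|v - \bar v\| < \epsilon$ and all $y \in U'$, $v^T \nabla^2 f(y) v < \lambda_n/2 < 0$. Hence $t \mapsto f(x+tv)$ is strictly concave on the convex set $\{t : x+tv \in U'\}$, so $\lev_{\geq l}$ of this one-variable concave function is an interval (possibly empty), proving that $S_{l,v}(x)$ is a line segment or empty.

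Claim (3) splits into two inclusions. For $S_{l,v}(x) \cap U' \subset \bar S_{\delta,l,v}(x)$, let $y \in S_{l,v}(x)\cap U'$, so $f(y) \geq l$ and $y$ lies on the line through $x$ in direction $v$. Since $\nabla f(\bar x) = 0$, Taylor's theorem gives $f(y) = f(\bar x) + \tfrac 12 (y-\bar x)^T \nabla^2 f(\xi)(y-\bar x)$ for some $\xi$ on the segment $[\bar x, y] \subset U'$. Subtracting from $\bar f_\delta(y) = f(\bar x) + \tfrac 12 (y - \bar x)^T[\nabla^2 f(\bar x) + \delta I](y-\bar x)$ and using claim (1), one gets $\bar f_\delta(y) - f(y) = \tfrac 12 (y-\bar x)^T[\nabla^2 f(\bar x) + \delta I - \nabla^2 f(\xi)](y-\bar x) \geq 0$, so $\bar f_\delta(y) \geq f(y) \geq l$ and $y \in \bar S_{\delta,l,v}(x)$.

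The second inclusion $\bar S_{\delta,l,v}(x) \subset U'$ is the main obstacle, since it needs a uniform bound on the length of the interval cut out of the line $\{x\} + \mathbb R\{v\}$ by $\lev_{\geq l}\bar f_\delta$. Here the plan is: for $\delta$ small enough, $\bar f_\delta$ remains concave along directions near $\bar v$ with $v^T(\nabla^2 f(\bar x) + \delta I)v \leq \tfrac{\lambda_n}{2}$, and its maximum along the line $\{x\} + \mathbb R\{v\}$ is attained at some $t^*$ with $|t^*|$ small when $\|x - \bar x\|$ is small (bounded via first order conditions for the concave quadratic). The maximum value $\bar f_\delta(x + t^* v)$ is then $f(\bar x) + O(\|x - \bar x\|^2)$. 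Using concavity, the length of $\{t : \bar f_\delta(x+tv) \geq l\}$ is bounded by $2\sqrt{2(\bar f_\delta(x + t^*v) - l)/|\lambda_n/2|}$, which tends to $0$ as $x \to \bar x$ and $l \to f(\bar x)$. Thus by choosing $\epsilon$ small enough (relative to a ball around $\bar x$ contained in $U'$), the entire interval $\bar S_{\delta,l,v}(x)$ lies in $U'$. Putting the constraints from all three parts together gives a single $\epsilon$ that works.
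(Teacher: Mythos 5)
Your proposal is correct and follows essentially the same route as the paper: continuity of $\nabla^2 f$ for (1) and (2), the pointwise bound $f\leq\bar f_\delta$ on $U'$ (which the paper asserts and you justify via the Taylor remainder) for the first inclusion in (3), and control of the endpoints of the interval $\bar S_{\delta,l,v}(x)$ for the second inclusion, where the paper uses the explicit quadratic-formula roots $\tilde t_i$ and you equivalently use the vertex form of the concave quadratic. Your explicit remark that $\delta$ must be small enough that $v^T[\nabla^2 f(\bar x)+\delta I]v<0$ is a point the paper leaves implicit.
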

\begin{proof}
The statement (1) holds for some $U^{\prime}$ of $\bar{x}$. We can
shrink $U^{\prime}$ if necessary so that $\bar{v}^{T}\nabla^{2}f(x)\bar{v}<0$
for all $x\in U^{\prime}$, and an $\epsilon>0$ can be found so that
(2) is satisfied. 

Choose $\gamma>0$ such that $\mathbb{B}_{\gamma}(\bar{x})\subset U^{\prime}$.
Then condition (1) ensures that $f(x)<\bar{f}_{\delta}(x)$ for all
$x\in U^{\prime}$, so $S_{l,v}(x)\cap U^{\prime}\subset\bar{S}_{\delta,l,v}(x)$.
The endpoints of the line segment $\bar{S}_{l,v}(x)$ are of the form
$[x+\tilde{t}_{1}v,x+\tilde{t}_{2}v]$, whose endpoints can be calculated
using the quadratic formula employed in the proof of Proposition \ref{pro:quad-formula}
as $x+\tilde{t}_{i}v$, $i=1,2$, giving us 
\begin{eqnarray*}
\tilde{t}_{i} & = & -\frac{[v^{T}H_{\delta}(x-\bar{x})]}{v^{T}H_{\delta}v}\\
 &  & \pm\frac{\sqrt{4[v^{T}H_{\delta}(x-\bar{x})]^{2}-4[v^{T}H_{\delta}v][(x-\bar{x})^{T}H_{\delta}(x-\bar{x})+2f(\bar{x})-2l]}}{2[v^{T}H_{\delta}v]},
\end{eqnarray*}
where $H_{\delta}=\nabla^{2}f(\bar{x})+\delta I$. The formula above
is continuous in $v$, $x$ and $l$ whenever $\tilde{t}_{i}$ is
real, and as $x\to\bar{x}$ and $l\to f(\bar{x})$, we have $\tilde{t}_{i}\to0$.
From $\|x+\tilde{t}_{i}v-\bar{x}\|\leq\|x-\bar{x}\|+|\tilde{t}_{i}|$,
we can choose $\epsilon$ small enough so that if $\|v-\bar{v}\|<\epsilon$,
$|l-f(\bar{x})|<\epsilon$ and $x\in\mathbb{B}_{\epsilon}(\bar{x})$,
then $\|x+\tilde{t}_{i}v-\bar{x}\|<\gamma$, giving us $\bar{S}_{\delta,l,v}(x)\subset\mathbb{B}_{\gamma}(\bar{x})\subset U^{\prime}$.
This means that condition (3) holds.
\end{proof}
The expression \eqref{eq:g-diff-formula} gives us a way to calculate
derivatives of the parallel distance. We have the following results.
\begin{lem}
\label{lem:Grad-Hess-g}(Gradient and Hessian of $g_{l,v}$) Let $f:\mathbb{R}^{n}\to\mathbb{R}$
be $\mathcal{C}^{2}$ everywhere. Recall the function $g_{l,v}:\mathbb{R}^{n}\to\mathbb{R}$
and the neighborhoods $U$ and $U^{\prime}$ on which $g_{l,v}$ is
defined. Suppose that $g_{l,v}$ can be represented as \eqref{eq:g-diff-formula}.
Let $z(x)$ and $z^{\prime}(x)$ be the respective maximizers in the
definitions of $g_{l,v,1}$ and $g_{l,v,2}$ in \eqref{eq:glv1} and
\eqref{eq:glv2}. Then, provided $\nabla f(z(x))^{T}v\neq0$ and $\nabla f(z^{\prime}(x))^{T}v\neq0$,
we have 
\[
\nabla g_{l,v}(x)=-\frac{\nabla f(z(x))}{\nabla f(z(x))^{T}v}+\frac{\nabla f(z^{\prime}(x))}{\nabla f(z^{\prime}(x))^{T}v}.
\]
To simplify the notation, we suppress the dependence of $z$ and $z^{\prime}$
on $x$. We also have 
\begin{eqnarray*}
\nabla^{2}g_{l,v}(x) & = & -\left(I-\frac{\nabla f(z)v^{T}}{v^{T}\nabla f(z)}\right)\frac{\nabla^{2}f(z)}{v^{T}\nabla f(z)}\left(I-\frac{\nabla f(z)v^{T}}{v^{T}\nabla f(z)}\right)^{T}\\
 &  & +\left(I-\frac{\nabla f(z^{\prime})v^{T}}{v^{T}\nabla f(z^{\prime})}\right)\frac{\nabla^{2}f(z^{\prime})}{v^{T}\nabla f(z^{\prime})}\left(I-\frac{\nabla f(z^{\prime})v^{T}}{v^{T}\nabla f(z^{\prime})}\right)^{T}.
\end{eqnarray*}
\end{lem}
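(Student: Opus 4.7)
The plan is to exploit the one-dimensional nature of $S_{l,v}(x)$: parametrize the endpoints $z(x)$ and $z^\prime(x)$ as $z(x) = x + t(x)v$ and $z^\prime(x) = x + t^\prime(x)v$, where $t(x)$ and $t^\prime(x)$ are implicitly defined by $f(x + tv) = l$ (with the right choice of root on each side of the segment). The condition $\nabla f(z(x))^T v \neq 0$ is exactly the transversality needed for the implicit function theorem, which gives
\[
\nabla t(x) = -\frac{\nabla f(z(x))}{\nabla f(z(x))^T v},
\]
and an analogous formula for $\nabla t^\prime(x)$. This handles the question of smoothness of $z(\cdot)$ and $z^\prime(\cdot)$ as $\mathcal{C}^1$ (or $\mathcal{C}^2$) maps of $x$.

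Next I would read off the gradients from \eqref{eq:glv1}--\eqref{eq:glv2}. Since $v$ is a unit vector, $g_{l,v,1}(x) = v^T z(x) = v^T x + t(x)$ and $g_{l,v,2}(x) = -v^T z^\prime(x) = -v^T x - t^\prime(x)$. Substituting the implicit-function formulas gives
\[
\nabla g_{l,v,1}(x) = v - \frac{\nabla f(z)}{\nabla f(z)^T v}, \qquad \nabla g_{l,v,2}(x) = -v + \frac{\nabla f(z^\prime)}{\nabla f(z^\prime)^T v},
\]
and adding the two yields the claimed formula for $\nabla g_{l,v}(x)$. The $\pm v$ terms cancel, which is the mechanism that makes the gradient intrinsic to the level set geometry.

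For the Hessian, I would differentiate $\nabla g_{l,v,1}$ a second time. The key calculation is the Jacobian of $z$ with respect to $x$, namely
\[
J(x) := \tfrac{\partial z}{\partial x} = I + v\nabla t(x)^T = I - \frac{v\,\nabla f(z)^T}{v^T \nabla f(z)}.
\]
Since $\nabla f(z)$ depends on $x$ only through $z$, the chain rule gives $\tfrac{\partial}{\partial x}\nabla f(z) = \nabla^2 f(z)\,J$. Applying the quotient rule to $\nabla f(z)/(v^T \nabla f(z))$ and factoring out $J$ on the right and an expression of the form $I - \nabla f(z) v^T/(v^T\nabla f(z))$ on the left produces $\nabla^2 g_{l,v,1}(x) = -J^T\,\nabla^2 f(z)\,J\,/(v^T \nabla f(z))$. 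Observing that $J^T = I - \nabla f(z) v^T/(v^T \nabla f(z))$ puts this exactly in the symmetric form stated in the lemma; an identical computation at $z^\prime$ furnishes the second summand.

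The bookkeeping in the Hessian step is the only place where things can go wrong: one must keep the two cancellation mechanisms straight — the $+v$ contribution to $\nabla g_{l,v,1}$ disappearing under differentiation, and the rank-one correction in $J$ combining cleanly with the rank-one correction coming from differentiating $1/(v^T \nabla f(z))$ to yield a symmetric sandwich. Once one expands everything and collects terms, however, the symmetry drops out automatically, so the main obstacle is really just organizing the algebra rather than any conceptual difficulty.
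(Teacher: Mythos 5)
Your proposal is correct and follows essentially the same route as the paper: the implicit function theorem applied to $f(x+tv)=l$ gives $\nabla t(x)=-\nabla f(z)/(\nabla f(z)^{T}v)$ and hence the gradient formula, and the Hessian comes from differentiating this again via the chain rule through $z(x)=x+t(x)v$. The only difference is presentational — you keep the computation in matrix form and read off the symmetric sandwich $-J^{T}\nabla^{2}f(z)J/(v^{T}\nabla f(z))$ with $J=\partial z/\partial x$ directly, whereas the paper expands componentwise and substitutes $\nabla g_{l,v,1}$ at the end to reach the same expression.
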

\begin{proof}
Write $F(d,t):=f(x+tv+d)=f\big((x+\bar{t}v)+d+(t-\bar{t})v\big)$.
We evaluate the partial derivatives of $F$ at $(0,\bar{t})$ to be
\[
\nabla_{d}F(0,\bar{t})=\nabla f(x+\bar{t}v)\qquad\mbox{and}\qquad\nabla_{t}F(0,\bar{t})=\nabla f(x+\bar{t}v)^{T}v.
\]
For each $d$, we can find $t$ such that $F(d,t)=0$. By the implicit
function theorem, the derivative of $t$ with respect to $d$ equals
$-\frac{\nabla f(x+\bar{t}v)}{\nabla f(x+\bar{t}v)^{T}v}=-\frac{\nabla f(z(x))}{\nabla f(z(x))^{T}v}$
provided the denominator is nonzero. From this and the fact that $g_{l,v,1}(\cdot)$
and $g_{l,v,2}(\cdot)$ are constant when moving in the direction
$v$, we get 
\begin{equation}
\nabla g_{l,v,1}(x)=-\frac{\nabla f(z(x))}{\nabla f(z(x))^{T}v}+v.\label{eq:grad-g-l-v-1}
\end{equation}
Similarly, we have $\nabla g_{l,v,2}(x)=\frac{\nabla f(z^{\prime}(x))}{\nabla f(z^{\prime}(x))^{T}v}-v$.
The formula for $\nabla g_{l,v}$ is easily deduced.

Next, we calculate $\nabla^{2}g_{l,v}$ by first calculating $\nabla^{2}g_{l,v,1}$
and $\nabla^{2}g_{l,v,2}$. To reduce notation, we suppress the dependence
of $z$ and $z^{\prime}$ on $x$. Taking the $m$th component of
\eqref{eq:grad-g-l-v-1} gives 
\[
\frac{\partial g_{l,v,1}}{\partial x_{m}}(x)=-\frac{1}{v^{T}\nabla f(z)}\frac{\partial f(z)}{\partial x_{m}}+v_{m},
\]
so

\begin{eqnarray*}
\frac{\partial}{\partial x_{m'}}\left(\frac{\partial g_{l,v,1}}{\partial x_{m}}\right)(x) & = & \frac{-[v^{T}\nabla f(z)]\frac{\partial}{\partial x_{m'}}\left(\frac{\partial f(z)}{\partial x_{m}}\right)+\frac{\partial f(z)}{\partial x_{m}}\left[\frac{\partial}{\partial x_{m'}}(v^{T}\nabla f(z))\right]}{[v^{T}\nabla f(z)]^{2}}.
\end{eqnarray*}

Note that $z(x)=x+g_{l,v,1}(x)v-vv^{T}x$ and $z'(x)=x-g_{l,v,2}(x)v+vv^{T}x$.
We use the notation $1_{\{a=b\}}$ to mean 
\[
1_{\{a=b\}}=\begin{cases}
1 & \mbox{ if }a=b\\
0 & \mbox{ otherwise.}
\end{cases}
\]
So $\frac{\partial z_{k}}{\partial x_{m'}}=1_{\{m'=k\}}+\frac{\partial g_{l,v,1}(x)}{\partial x_{m'}}v_{k}-v_{m^{\prime}}v_{k}$,
and $\frac{\partial z'_{k}}{\partial x_{m'}}=1_{\{m'=k\}}-\frac{\partial g_{l,v,2}(x)}{\partial x_{m'}}v_{k}+v_{m^{\prime}}v_{k}.$
So by the multi-variable chain rule we have 

\begin{eqnarray*}
 &  & \frac{\partial}{\partial x_{m'}}\left(\frac{\partial g_{l,v,1}}{\partial x_{m}}\right)(x)\\
 & = & \frac{-[v^{T}\nabla f(z)]\sum_{k=1}^{n}\left[\frac{\partial^{2}f(z)}{\partial x_{k}\partial x_{m}}\left[1_{\{m'=k\}}+\frac{\partial g_{l,v,1}(x)}{\partial x_{m'}}v_{k}-v_{m^{\prime}}v_{k}\right]\right]}{[v^{T}\nabla f(z)]^{2}}\\
 &  & +\frac{\frac{\partial f(z)}{\partial x_{m}}\sum_{k=1}^{n}\left[v_{k}\sum_{k'=1}^{n}\frac{\partial^{2}f(z)}{\partial x_{k}\partial x_{k'}}\left[1_{\{k'=m'\}}+\frac{\partial g_{l,v,1}(x)}{\partial x_{m'}}v_{k'}-v_{m^{\prime}}v_{k^{\prime}}\right]\right]}{[v^{T}\nabla f(z)]^{2}}.
\end{eqnarray*}

Now we have, 
\begin{eqnarray*}
\nabla^{2}g_{l,v,1}(x) & = & \frac{-\nabla^{2}f(z)-\nabla^{2}f(z)v[\nabla g_{l,v,1}(x)^{T}-v^{T}]}{v^{T}\nabla f(z)}\\
 &  & +\frac{\nabla f(z)v^{T}\nabla^{2}f(z)+\nabla f(z)v^{T}\nabla^{2}f(z)v[\nabla g_{l,v,1}(x)^{T}-v^{T}]}{[v^{T}\nabla f(z)]^{2}}.
\end{eqnarray*}

Substituting $\nabla g_{l,v,1}(x)=-\frac{\nabla f(z)}{v^{T}\nabla f(z)}+v$,
we get:

\begin{eqnarray*}
 &  & \nabla^{2}g_{l,v,1}(x)\\
 & = & -\frac{\nabla^{2}f(z)}{v^{T}\nabla f(z)}+\frac{\nabla^{2}f(z)v\nabla f(z)^{T}+\nabla f(z)v^{T}\nabla^{2}f(z)}{[v^{T}\nabla f(z)]^{2}}\\
 &  & -\frac{\nabla f(z)v^{T}\nabla^{2}f(z)v\nabla f(z)^{T}}{[v^{T}\nabla f(z)]^{3}}.
\end{eqnarray*}
The formula for $\nabla^{2}g_{l,v,2}(x)$ is similar, and the formula
for $\nabla^{2}g_{l,v}(x)$ follows readily.
\end{proof}
The formulas for $\nabla(g_{l,v}^{2})$ and $\nabla^{2}(g_{l,v}^{2})$
can now be calculated, as is done below.
\begin{prop}
\label{pro:Grad-Hess-g-2}(Gradient and Hessian of $g_{l,v}^{2}$)
Given the conditions in Lemma \ref{lem:Grad-Hess-g}, the formulas
for $\nabla(g_{l,v}^{2})(x)$ and $\nabla^{2}(g_{l,v}^{2})(x)$ are
$\nabla(g_{l,v}^{2})(x)=2g_{l,v}(x)\nabla g_{l,v}(x)$ and 
\begin{eqnarray*}
\nabla^{2}(g_{l,v}^{2})(x) & = & 2\left(-\frac{\nabla f(z)}{\nabla f(z)^{T}v}+\frac{\nabla f(z^{\prime})}{\nabla f(z^{\prime})^{T}v}\right)\left(-\frac{\nabla f(z)}{\nabla f(z)^{T}v}+\frac{\nabla f(z^{\prime})}{\nabla f(z^{\prime})^{T}v}\right)^{T}\\
 &  & -2g_{l,v}(x)\left(I-\frac{\nabla f(z)v^{T}}{v^{T}\nabla f(z)}\right)\frac{\nabla^{2}f(z)}{v^{T}\nabla f(z)}\left(I-\frac{\nabla f(z)v^{T}}{v^{T}\nabla f(z)}\right)^{T}\\
 &  & +2g_{l,v}(x)\left(I-\frac{\nabla f(z^{\prime})v^{T}}{v^{T}\nabla f(z^{\prime})}\right)\frac{\nabla^{2}f(z^{\prime})}{v^{T}\nabla f(z^{\prime})}\left(I-\frac{\nabla f(z^{\prime})v^{T}}{v^{T}\nabla f(z^{\prime})}\right)^{T}.
\end{eqnarray*}
\end{prop}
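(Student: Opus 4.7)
The plan is to derive both formulas directly from Lemma \ref{lem:Grad-Hess-g} using only the chain rule and product rule from elementary calculus, since $g_{l,v}$ is differentiable wherever the denominators $\nabla f(z(x))^T v$ and $\nabla f(z'(x))^T v$ are nonzero, and the function $t \mapsto t^2$ is smooth on $\mathbb{R}$. The gradient formula $\nabla(g_{l,v}^2)(x) = 2 g_{l,v}(x)\nabla g_{l,v}(x)$ is just the chain rule applied to the composition of $g_{l,v}$ with the squaring function; inserting the formula for $\nabla g_{l,v}$ from Lemma \ref{lem:Grad-Hess-g} then yields the stated expression in terms of $z$ and $z'$.

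For the Hessian, I would apply the product rule to the identity $\nabla(g_{l,v}^2) = 2 g_{l,v}\,\nabla g_{l,v}$, obtaining
\begin{equation*}
\nabla^2(g_{l,v}^2)(x) \;=\; 2\,\nabla g_{l,v}(x)\,\nabla g_{l,v}(x)^T \;+\; 2\,g_{l,v}(x)\,\nabla^2 g_{l,v}(x).
\end{equation*}
The first term on the right, after substituting the gradient formula from Lemma \ref{lem:Grad-Hess-g}, becomes the rank-one outer product that appears as the first line of the claimed Hessian formula. The second term, after substituting the Hessian formula from Lemma \ref{lem:Grad-Hess-g}, produces the two remaining lines verbatim (the minus-plus sign pattern arising from the signs already present in $\nabla^2 g_{l,v}$).

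The entire argument is bookkeeping: there is no genuine obstacle beyond assembling the rank-one piece and the two symmetric $(I - \nabla f(\cdot) v^T / v^T \nabla f(\cdot))\,\nabla^2 f(\cdot)\,(\cdot)^T$ pieces in the correct order. Since the two formulas from Lemma \ref{lem:Grad-Hess-g} already require $\nabla f(z(x))^T v \neq 0$ and $\nabla f(z'(x))^T v \neq 0$, these hypotheses carry over automatically, and no further regularity analysis is needed. The only real risk is an algebraic slip in matching signs when combining the $z$-term and the $z'$-term, which is easily checked against the formula for $\nabla g_{l,v}$ in the gradient step.
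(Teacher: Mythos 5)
Your proposal is correct and follows essentially the same route as the paper: the chain rule gives $\nabla(g_{l,v}^{2})=2g_{l,v}\nabla g_{l,v}$, the product rule gives $\nabla^{2}(g_{l,v}^{2})=2\nabla g_{l,v}\nabla g_{l,v}^{T}+2g_{l,v}\nabla^{2}g_{l,v}$, and substituting the formulas from Lemma \ref{lem:Grad-Hess-g} yields the stated expression. Nothing further is needed.
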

\begin{proof}
We have $\frac{\partial(g_{l,v}^{2})}{\partial x_{m}}(x)=2g_{l,v}(x)\frac{\partial(g_{l,v})}{\partial x_{m}}(x)$,
so $\nabla(g_{l,v}^{2})(x)=2g_{l,v}(x)\nabla g_{l,v}(x)$. Also, 
\[
\frac{\partial}{\partial x_{m'}}\left(\frac{\partial(g_{l,v}^{2})}{\partial x_{m}}(x)\right)=2\frac{\partial g_{l,v}(x)}{\partial x_{m'}}\frac{\partial g_{l,v}(x)}{\partial x_{m}}+2g_{l,v}(x)\frac{\partial^{2}g_{l,v}(x)}{\partial x_{m'}\partial x_{m}}.
\]
We thus have 
\[
\nabla^{2}(g_{l,v}^{2})(x)=2\nabla g_{l,v}(x)\nabla g_{l,v}(x)^{T}+2g_{l,v}(x)\nabla^{2}g_{l,v}(x),
\]
which gives the formula for $\nabla^{2}(g_{l,v}^{2})(x)$. 
\end{proof}
We now discuss the situation when we consider $g_{l,v}$ instead
of its square. Consider a quadratic function $f:\mathbb{R}^{n}\to\mathbb{R}$
whose Hessian has one negative eigenvalue and $n-1$ positive eigenvalues.
For the critical point $\bar{x}$ and critical level $f(\bar{x})$,
a plot of $\lev_{\leq l}f$ for $l<f(\bar{x})$ has two distinct convex
components. One would expect that if $f:\mathbb{R}^{n}\to\mathbb{R}$
is $\mathcal{C}^{2}$ at a nondegenerate saddle point $\bar{x}$ of
Morse index one and $l<f(\bar{x})$, $U\cap\lev_{\leq l}f$ would
consist of two convex components for some neighborhood $U$ of $\bar{x}$.
We have the following result on the convexity of the level sets from
\cite{mountain}.
\begin{prop}
\cite{mountain} \label{pro:2-convex-sets}(Convexity of level sets)
Suppose that $f:\mathbb{R}^{n}\rightarrow\mathbb{R}$ is $\mathcal{C}^{2}$
in a neighborhood of a nondegenerate critical point $\bar{x}$ of
Morse index one. Then if $\epsilon>0$ is small enough, there is a
convex neighborhood $U_{\epsilon}$ of $\bar{x}$ such that $U_{\epsilon}\cap\lev_{\leq f(\bar{x})-\epsilon}f$
is a union of two disjoint convex sets.
\end{prop}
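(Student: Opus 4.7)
The plan is to verify the standard local convexity criterion for smooth sublevel sets: if $g$ is $\mathcal{C}^{2}$ with $\nabla g(x)\neq 0$ at a boundary point $x$ of $\lev_{\leq c}g$, then the sublevel set is locally convex at $x$ whenever $u^{T}\nabla^{2}g(x)u\geq 0$ for every $u\perp\nabla g(x)$ (the defining hypersurface bends away from the sublevel set in every tangent direction). I would apply this with $g=f$ and $c=f(\bar{x})-\epsilon$, after first separating the two candidate components.

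Let $H=\nabla^{2}f(\bar{x})$, write its eigenvalues as $\lambda_{1}\geq\cdots\geq\lambda_{n-1}>0>\lambda_{n}$, and let $\bar{v}$ be a unit eigenvector for $\lambda_{n}$. Decompose $x-\bar{x}=s\bar{v}+w$ with $w\perp\bar{v}$. Using $f(x)-f(\bar{x})=\tfrac{1}{2}(x-\bar{x})^{T}H(x-\bar{x})+o(\|x-\bar{x}\|^{2})$, the inequality $f(x)\leq f(\bar{x})-\epsilon$ forces $|s|\geq c_{0}\sqrt{\epsilon}$ for some constant $c_{0}>0$ depending only on $|\lambda_{n}|$, provided $x$ is close enough to $\bar{x}$. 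This immediately splits the level set into two candidate components $C^{+}=\{s>0\}$ and $C^{-}=\{s<0\}$, separated by the hyperplane $\{s=0\}$.

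Next I would choose $U_{\epsilon}$ to be an open convex anisotropic slab $\{\bar{x}+s\bar{v}+w:|s|<\rho,\,\|w\|<\rho^{\prime}\}$, with $\rho$ and $\rho^{\prime}$ both proportional to (but small constants times) $\sqrt{\epsilon}$. For any $x\in U_{\epsilon}$ on the level surface $\lev_{=f(\bar{x})-\epsilon}f$, the expansion $\nabla f(x)=H(x-\bar{x})+o(\|x-\bar{x}\|)$ gives a $\bar{v}$-component of magnitude at least a constant times $\sqrt{\epsilon}$, and a $\bar{v}^{\perp}$-component of norm $O(\rho^{\prime})$. Taking $\rho^{\prime}/\sqrt{\epsilon}$ small, the angle between $\nabla f(x)$ and $\bar{v}$ is uniformly small. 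Combining this with continuity of $\nabla^{2}f$ at $\bar{x}$ and strict positive-definiteness of $H$ on $\bar{v}^{\perp}$, one obtains $u^{T}\nabla^{2}f(x)u>0$ for every unit $u\perp\nabla f(x)$, so the criterion above applies to each of $C^{+}\cap U_{\epsilon}$ and $C^{-}\cap U_{\epsilon}$.

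The main obstacle is the uniform control on the direction of $\nabla f$ on the boundary level set, because a spherical neighborhood $\mathbb{B}_{\rho}(\bar{x})$ is inadequate: the transverse component $w$ can reach size $\rho$, tilting $\nabla f(x)$ arbitrarily far from $\bar{v}$ and leaving no margin for $\nabla^{2}f(x)$ to be positive definite on $\nabla f(x)^{\perp}$. The remedy is the anisotropic slab above, which remains convex, is narrow enough in the $\bar{v}^{\perp}$ directions to force the tangent hyperplanes $\nabla f(x)^{\perp}$ to lie arbitrarily close to $\bar{v}^{\perp}$, and wide enough in the $\bar{v}$ direction to accommodate the natural separation of scale $\sqrt{\epsilon}$ between the two components.
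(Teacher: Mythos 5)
The paper gives no proof of this proposition --- it is imported verbatim from \cite{mountain} --- so there is nothing to compare against line by line; I can only assess your argument on its own terms. Your overall strategy is sound: the separation estimate $|s|\geq c_{0}\sqrt{\epsilon}$ correctly splits $U_{\epsilon}\cap\lev_{\leq f(\bar{x})-\epsilon}f$ into two pieces at positive distance, and the anisotropic slab with $\rho'$ a small multiple of $\sqrt{\epsilon}$ is a legitimate (and well-motivated) device for forcing $\nabla f$ to stay within a small angle of $\pm\bar{v}$ on the bounding level surface, which in turn gives $u^{T}\nabla^{2}f(x)u\geq\lambda_{n-1}/2-\delta>0$ for unit $u\perp\nabla f(x)$.

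The one genuine gap is the final step: pointwise local convexity of the sublevel set at each boundary point does not by itself give convexity of $C^{\pm}\cap U_{\epsilon}$; you need either a Tietze--Nakajima-type theorem (a connected, relatively closed, locally convex subset of an open convex set is convex), together with a verification that each piece is connected, or a direct argument. Your own construction supplies the direct argument, and you should make it explicit: since $\partial f/\partial s$ is bounded away from $0$ on $\{s\geq c_{0}\sqrt{\epsilon}\}\cap U_{\epsilon}$ (it is $\lambda_{n}s+O(\rho')+o(\sqrt{\epsilon})$), the implicit function theorem writes the bounding surface as a graph $s=h(w)$ over the full disk $\{\|w\|<\rho'\}$ --- for which you must also check that $h(w)<\rho$ for every such $w$, which forces $\rho$ to exceed $c_{0}\sqrt{\epsilon}$ by a definite margin, so $\rho$ is not a ``small'' multiple of $\sqrt{\epsilon}$ --- and then $C^{+}\cap U_{\epsilon}=\{(s,w):h(w)\leq s<\rho,\ \|w\|<\rho'\}$ is convex iff $h$ is convex. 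Differentiating $f(\bar{x}+h(w)\bar{v}+w)=f(\bar{x})-\epsilon$ twice shows $\nabla^{2}h(w)$ is $-(\partial_{s}f)^{-1}$ times the second derivative of $f$ along the tangent lift of each $w$-direction, and your positivity estimate on $u^{T}\nabla^{2}f(x)u$ is exactly what makes this positive semidefinite. With that paragraph added, the proof is complete.
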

The example below show that Proposition \ref{pro:2-convex-sets} may
be the best possible.
\begin{example}
\label{exa:tightness-extend}(Tightness in Proposition \ref{pro:2-convex-sets})
Figure \ref{fig:nonconvex-level-sets} shows the level set $\lev_{\leq0}f$
for $f:\mathbb{R}^{2}\to\mathbb{R}$ defined by $f(x)=(x_{2}-x_{1}^{2})(x_{1}-x_{2}^{2})$.
For this particular $f$, we have the following.
\begin{enumerate}
\item In Proposition \ref{pro:2-convex-sets}, the neighborhood $U_{\epsilon}$
must satisfy $\diam(U_{\epsilon})\searrow0$ as $\epsilon\searrow0$.
In other words, the dependence of the neighborhood $U_{\epsilon}$
on the parameter $\epsilon$ cannot be lifted. 
\item The level set $\lev_{\leq0}f$ cannot be written as a union of two
convex sets in some neighborhood of $(0,0)$.
\item As a consequence of Proposition \ref{pro:2-convex-sets} and (1),
the function $g_{l,v}:\mathbb{R}^{n}\to\mathbb{R}$ is convex in $x$
in $U_{\epsilon}$ for $l=f(\bar{x})-\epsilon$, but the region on
which $g_{l,v}$ is convex shrinks as $l$ approaches $f(\bar{x})=0$.
\end{enumerate}
\end{example}
\begin{figure}
\includegraphics[scale=0.5]{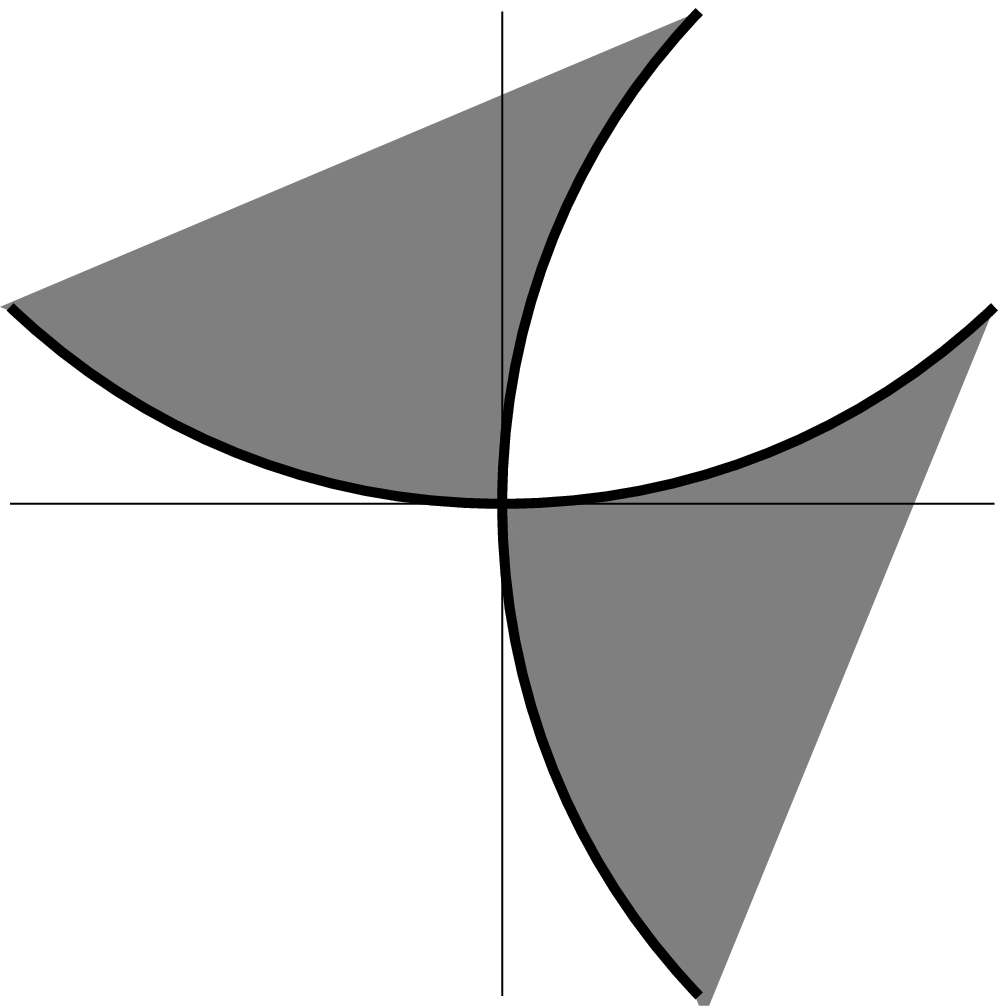}\caption{\label{fig:nonconvex-level-sets}$\lev_{\leq0}f$ for $f(x)=(x_{2}-x_{1}^{2})(x_{1}-x_{2}^{2})$
in Example \ref{exa:tightness-extend}.}
\end{figure}

\section{\label{sec:global-framework}Framework for a mountain pass algorithm}

 In this section, we first present subroutines for a mountain
pass algorithm, and then show how the corresponding mountain pass
algorithm has local and global properties. 

We first present the subroutines that make up the global algorithm.
\begin{algorithm}
\label{alg:subroutines}(Subroutines in global mountain pass algorithm)
Here are the subroutines that will be the building blocks of our global
mountain pass algorithm.
\begin{enumerate}
\item [(PD)] (Parallel distance reduction) Given points $z$ and $z^{\prime}$
and a level $l$ such that $f(z)=f(z^{\prime})=l$,

\begin{enumerate}
\item Let $v=z-z^{\prime}$, and let $x$ be any point on the segment $[z,z^{\prime}]$.
\item From $\nabla f(z)$ and $\nabla f(z^{\prime})$, determine $\nabla(g_{l,v}^{2})(x)$.
The Hessian $\nabla^{2}g_{l,v}^{2}(x)$ may also be calculated or
estimated for a (quasi-) Newton method. These values will give a direction
$d$ for decrease of $g_{l,v}^{2}(\cdot)$. 
\item There is some $t>0$ such that $g_{l,v}(x+td)<g_{l,v}(x)$. Two cases
are possible. If $g_{l,v}(x+td)>0$, then $z(x+td)$ and $z^{\prime}(x+td)$
are new iterates reducing the parallel distance. If $g_{l,v}(x+td)=0$,
then let $x^{\prime}$ be a local maximum of $f$ on the line $\{x+td\}+\mathbb{R}\{v\}$.
We have $f(x^{\prime})\leq l$, and we should run $(l\downarrow)$
below.
\end{enumerate}
\item [(Av)] (Adjusting vector $v$) Given points $z$ and $z^{\prime}$
and a level $l$ such that $f(z)=f(z^{\prime})=l$,

\begin{enumerate}
\item Perturb $z$ and/or $z^{\prime}$ such that we still have $f(z)=f(z^{\prime})=l$,
and that $\|z-z^{\prime}\|$ is reduced. The vector $v=z-z^{\prime}$
is now adjusted.
\end{enumerate}
\item [($l\downarrow$)] (Decrease level $l$) Given $x$ and $v\neq0$
such that $x$ is a local maximum of $f$ on $\{x\}+\mathbb{R}\{v\}$, 

\begin{enumerate}
\item Find local minimizer of $f$ on $\{x\}+\mathbb{R}\{d\}$, where $d\perp v$
and $-\nabla f(x)^{T}d<0$. The direction $d$ can be chosen to be
the projection of $-\nabla f(x)$ onto the subspace perpendicular
to $v$. 
\end{enumerate}
\item [($l\uparrow$)] (Increase level $l$) Given points $z$ and $z^{\prime}$
and a level $l$ such that $f(z)=f(z^{\prime})=l$,

\begin{enumerate}
\item Choose some $x\in[z,z^{\prime}]$ such that $f(x)>l$. (One choice
is $x=\frac{1}{2}(z+z^{\prime})$.) Perturb $z$ and $z^{\prime}$
so that $f(z)$ and $f(z^{\prime})$ equal this new value of $l$. 
\end{enumerate}
\end{enumerate}
\end{algorithm}
Other ways of adjusting the vector $v$ apart from (Av) are possible,
though they are not as simple as (Av). For example, the vector $v$
can also be calculated by taking the eigenvector corresponding to
the negative eigenvalue of $\nabla^{2}f(z_{i})$, $\nabla^{2}f(z_{i}^{\prime})$,
or some combination of the two matrices. 

We gave a method of decreasing the level $l$ in ($l\downarrow$).
Adjustments to the strategy presented in ($l\downarrow$) can be made
as needed. For example, the condition $d\perp v$ can be adjusted. 

There are also other reasons to adjust $l$. First, the contrapositive
of Lemma \ref{lem:unif-terms}(1) later can be roughly interpreted
as follows: If $1/|v^{T}u(\nabla f(z))|$ is too small, then the critical
level is below $f(z)=l$. We can thus reduce the level $l$. Secondly,
when $g_{l,v}(x)$ is too high, signifying that the points $z(x)$
and $z^{\prime}(x)$ are too far apart, one can increase $l$. Third,
the points evaluated may not have function value $l$, making a different
value more suitable. Lastly, it is possible to estimate $l$ by setting
the minimizer of $g_{l,v}(x)^{2}$ to be zero from the formula in
\eqref{eq:g-l-v-full}.

\subsection{Fast local convergence }

We discuss the fast local convergence properties of the level set
algorithm. We recall our mountain pass algorithm in \cite{mountain},
where we proved local superlinear convergence of a level set algorithm
under restrictive assumptions, and show how the difficult steps there
can be seen as limiting cases of subroutines (Av) and ($l\downarrow$).

We recall our mountain pass algorithm in \cite{mountain}.
\begin{algorithm}
\label{alg:old-alg}\cite{mountain} (A local superlinearly convergent
algorithm) Let counter $i$ be $0$. Given points $z_{0}$ and $z_{0}^{\prime}$,
and a level $l_{0}$ such that $f(z_{0})=f(z_{0}^{\prime})=l_{0}$.
Let $U$ be an open neighborhood of the saddle point $\bar{x}$ that
contains $z_{0}$ and $z_{0}^{\prime}$. 
\begin{enumerate}
\item Perturb $z_{i}$ and $z_{i}^{\prime}$ to the points $\tilde{z}_{i}$
and $\tilde{z}_{i}^{\prime}$ so that for some open set $U$, $\tilde{z}_{i}$
and $\tilde{z}_{i}^{\prime}$ are the minimizers of the problem 
\begin{align}
\min_{x,y} & \|x-y\|\nonumber \\
\mbox{s.t. } & x,y\mbox{ lie in the same component }U\cap\lev_{\leq l_{i}}f\mbox{ as }z_{i}\mbox{ and }z_{i}^{\prime}\mbox{ respectively.}\label{eq:alt-proj}
\end{align}

\item Let $v_{i}$ be the unit vector in the same direction as $\tilde{z}_{i}-\tilde{z}_{i}^{\prime}$.
Find the minimum of $f$ on $U\cap L_{i}$, where $L_{i}$ is the
perpendicular bisector of $\tilde{z}_{i}$ and $\tilde{z}_{i}^{\prime}$.
Let this value be $l_{i+1}$. Find $z_{i+1}$ and $z_{i+1}^{\prime}$
such that they are points in the same components of the level set
$U\cap\lev_{\leq l_{i+1}}f$ as $\tilde{z}_{i}$ and $\tilde{z}_{i}^{\prime}$
respectively, and that $z_{i+1}-z_{i+1}^{\prime}$ points in the same
direction as $v_{i}$. 
\item Stop if $\|z_{i+1}-z_{i+1}^{\prime}\|$ is sufficiently small, or
until we find a point $x$ such that $\|\nabla f(x)\|$ is sufficiently
small. Increase the counter $i$, and return to step 1.
\end{enumerate}
\end{algorithm}
Algorithm \ref{alg:old-alg} can be built from the subroutines highlighted
in Algorithm \ref{alg:subroutines}. Step (1) can be seen as applying
the step (Av) infinitely many times, while step (2) can be seen as
applying one step of ($l\uparrow$), then applying ($l\downarrow$)
infinitely often till the minimizer of $f$ on $U\cap L_{i}$ is reached.

The main result in \cite{mountain} is that in some neighborhood $U$
of a nondegenerate saddle point $\bar{x}$ of Morse index one, the
steps in Algorithm \ref{alg:old-alg} are well defined, and Algorithm
\ref{alg:old-alg} converges locally superlinearly to $\bar{x}$.
This shows that level set methods can satisfy Principle (P1).

However, Algorithm \ref{alg:old-alg} has some disadvantages:
\begin{itemize}
\item [(D1)] Step 1 in Algorithm \ref{alg:old-alg} is difficult to perform
in practice. If an alternating projection method was used to solve
\eqref{eq:alt-proj} for example, the convergence will be very slow
when close to the minimizers.
\begin{figure}
\includegraphics[scale=0.4]{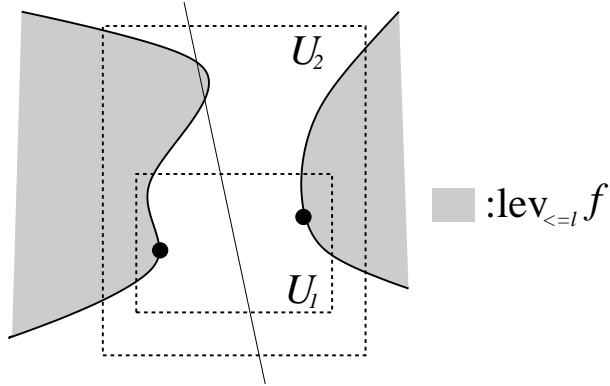}\caption{\label{fig:D2}We elaborate on the possible difficulties in finding
a lower bound of critical level explained in step 2 of Algorithm \ref{alg:old-alg}.
Let $L$ be the perpendicular bisector of the two closest points as
shown. The neighborhood $U_{1}$ is too small as a minimizer of $f$
on $U_{1}\cap L$ does not exist in the relative interior of $U_{1}\cap L$.
The neighborhood $U_{2}$ is too large since the minimum value of
$f$ on $U_{2}\cap L$ is worse than the previous lower bound on the
critical value. }
\end{figure}

\item [(D2)] Related to (D1) is the problem of ensuring that $U\cap\lev_{\leq l}f$
is a union of two components for some convex neighborhood $U$ of
$\bar{x}$. This in turn requires $l$ to satisfy $l<f(\bar{x})$,
where $f(\bar{x})$ is the critical level. Step 2 in Algorithm \ref{alg:old-alg}
ensures that the calculated level is an underestimate of the critical
level, but this step may involve more computational effort than is
necessary.
\end{itemize}
Algorithm \ref{alg:old-alg} can be extended to a global algorithm.
A few problems may arise in the global case. Firstly, the problem
of minimizing $f$ on $L_{i}$ is not necessarily easy. Sometimes,
$f$ may not have a local minimizer in $U\cap L_{i}$. Secondly, the
new estimate $l$ of the critical level $f(\bar{x})$ may be even
lower than the previous estimate, rendering it useless as a lower
bound on $f(\bar{x})$. Lastly, the estimate $l$ of the critical
level may actually be an upper estimate of $f(\bar{x})$ instead.
See Figure \ref{fig:D2}.

Proposition \ref{pro:quad-formula} suggests that using $g_{l,v}^{2}(\cdot)$
overcomes the difficulties (D1) and (D2).  Provided $v$ is close
enough to the eigenvector corresponding to the negative eigenvalue
of $\nabla f(\bar{x})$, the function $g_{l,v}^{2}(\cdot)$ restricted
to any $(n-1)$ dimensional affine space not containing $v$ is the
maximum of a quadratic with positive definite Hessian and $0$. One
can first minimize $g_{l,v}^{2}(\cdot)$ as a quadratic. Once close
enough to $\bar{x}$, the minimizer of the corresponding quadratic,
say $\tilde{x}$, will  give a good estimate of $\bar{x}$.

\subsection{Global convergence results}

We now look at the global mountain pass algorithm involving the subalgorithms
listed in Algorithm \ref{alg:subroutines}. 
\begin{algorithm}
\label{alg:new-alg}(Global mountain pass algorithm) Let the counter
$i$ be $0$. Suppose the points $z_{0}$ and $z_{0}^{\prime}$ and
a level $l_{0}$ are such that $f(z_{0})=f(z_{0}^{\prime})=l_{0}$.
Let $x_{0}$ be some point in the line segment $[z_{0},z_{0}^{\prime}]$.
Let $v_{0}=z_{0}-z_{0}^{\prime}$.
\begin{enumerate}
\item Run (PD) on $z_{i}$, $z_{i}^{\prime}$, $v_{i}$ and $l_{i}$. Three
outcomes are possible:

\begin{enumerate}
\item If the new parallel distance is positive and sufficient decrease in
the parallel distance is obtained, let the output be $z_{i+1}$ and
$z_{i+1}^{\prime}$. Let $l_{i+1}=l_{i}$. Run (Av), which perturbs
either $z_{i+1}$ or $z_{i+1}^{\prime}$. The vector $v_{i+1}$ is
set to be the unit vector in the direction of $z_{i+1}-z_{i+1}^{\prime}$.
\item If the new parallel distance is positive but the parallel distance
changed little from previous iterations, run ($l\uparrow$) to perturb
$z_{i+1}$ and $z_{i+1}^{\prime}$, and let $l_{i+1}$ be the new
level. The vector $v_{i+1}$ equals $v_{i}$, unchanged from before.
\item If the new parallel distance is zero, then let $l_{i+1}$ be the new
level, and let $x_{i+1}$ be the local maximum as stated in (PD).
Run ($l\downarrow$). The new level is still labeled as $l_{i+1}$.
The vector $v_{i+1}$ equals $v_{i}$, unchanged from before.
\end{enumerate}
\item Increase $i$ by one. If in the course of the calculations, a point
$x$ such that $\|\nabla f(x)\|$ is small is encountered, then the
algorithm ends. If $\|z_{i}-z_{i}^{\prime}\|$ is small and the distance
of $0$ to the convex hull of $\{\nabla f(z_{i}),\nabla f(z_{i}^{\prime})\}$
is small, then we can extrapolate some point $x\in[z_{i},z_{i}^{\prime}]$
for which $\|\nabla f(x)\|$ is small, and end the algorithm. Otherwise,
go back to step 1.
\end{enumerate}
\end{algorithm}

In Algorithm \ref{alg:new-alg}, the subroutines (PD) and (Av) reduce
the distance between the components of the level set $U\cap\lev_{\leq l}f$.
 Algorithm \ref{alg:new-alg} illustrates just one way to decide
which of the subroutines (PD), (Av), ($l\uparrow$) and ($l\downarrow$)
to use at each step, and other combinations are possible. There is
still flexibility on whether option 1(a) or 1(b) is taken. Once close
enough to the saddle point, a quadratic model method can be used. 

The basis of (P2) for both Algorithms \ref{alg:old-alg} and \ref{alg:new-alg}
is the following result.
\begin{thm}
\label{thm:conv-saddle}\cite{mountain}(Global convergence of level
set algorithm) Let $f:X\to\mathbb{R}$. Suppose $\{a_{i}\}_{i=0}^{\infty}$
and $\{b_{i}\}_{i=0}^{\infty}$ are sequences of points and $\{l_{i}\}_{i=0}^{\infty}$
is a sequence satisfying $l_{i}\nearrow f(\bar{x})$. If $x_{i}$
and $y_{i}$ lie in separate components of $\{x\mid f(x)\leq l_{i}\}$,
and $\bar{x}=\lim_{i\to\infty}a_{i}=\lim_{i\to\infty}b_{i}$, then
$\bar{x}$ is a saddle point.
\end{thm}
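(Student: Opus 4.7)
The plan is to argue by contrapositive using Definition \ref{def:mountain-pass}: if $\bar{x}$ is not a saddle point, it must either fail to be a critical point, be a local minimizer, or be a local maximizer. I would show that in each of these three cases, for all sufficiently large $i$ the points $a_i$ and $b_i$ must lie in a common component of $\lev_{\le l_i} f$, contradicting the hypothesis that they lie in separate components.

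The local-minimizer case is immediate from the hypotheses: since $f(a_i) \le l_i < f(\bar{x})$ and $a_i \to \bar{x}$, there are points arbitrarily close to $\bar{x}$ with $f$-value strictly less than $f(\bar{x})$, so $\bar{x}$ cannot be a local minimizer and nothing further needs to be said.

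For the case $\nabla f(\bar{x}) \neq 0$, by continuity of $\nabla f$ I would pick a small convex neighborhood $V$ of $\bar{x}$ on which $\nabla f$ does not vanish, shrinking if necessary so that there exists a fixed unit vector $w$ with $\nabla f(x)^{T} w > 0$ uniformly for $x \in V$. Then moving any point of $V$ in the direction $-w$ strictly decreases $f$, so the operation of shifting a point of $V \cap \lev_{\le l_i} f$ along $-w$ until it hits the boundary of $V$ provides a deformation retract of $V \cap \lev_{\le l_i} f$ onto a single face of $V$; in particular $V \cap \lev_{\le l_i} f$ is path connected. For all sufficiently large $i$ both $a_i$ and $b_i$ lie in $V$ and in $\lev_{\le l_i} f$, so a connecting path exists inside $\lev_{\le l_i} f$, contradicting separation.

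For the local-maximizer case, I would choose a small closed ball $V$ centered at $\bar{x}$ on which $f \le f(\bar{x})$. By uniform continuity of $f$ on $V$, for $l_i$ sufficiently close to $f(\bar{x})$ the open set $V \cap \lev_{> l_i} f$ is contained in a small concentric sub-ball $V_i \subset V$. Since the paper's standing assumption is $X = \mathbb{R}^n$ with $n \ge 2$, a ball minus a smaller concentric ball is path connected; hence $V \cap \lev_{\le l_i} f$ is path connected. As in the previous case, once $a_i, b_i \in V$ this gives the contradiction. The main obstacle I anticipate is the non-critical case, specifically ensuring that the retraction direction $-w$ works uniformly in $l_i$; the clean way to secure this is to take $V$ to be a thin convex slab aligned with $w$, which guarantees all shifted endpoints land on the same face independent of the level $l_i$.
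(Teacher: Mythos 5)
Note first that the paper offers no proof of this theorem --- it is quoted from \cite{mountain} --- so your argument can only be assessed on its own terms. The contrapositive split into the three cases (non-critical, local minimizer, local maximizer) is the natural skeleton, and the local-minimizer case is correctly dispatched. The non-critical case, however, contains a genuine gap: sliding along $-w$ until you hit the bottom face $F$ of the slab gives a deformation retraction of $V\cap\lev_{\leq l_i}f$ onto $F\cap\lev_{\leq l_i}f$, not onto all of $F$, and a deformation retraction preserves the number of path components. So ``retracts onto a single face, hence path connected'' is a non sequitur: $F\cap\lev_{\leq l_i}f$ can perfectly well be disconnected even though $\nabla f^{T}w>0$ throughout $V$ (e.g.\ $f(x,y)=y+h(x)$, $w=(0,1)$, with $h$ having a bump taller than the slab's height). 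The clean repair is the flow-box/implicit function theorem, which supplies a neighborhood $V$ of $\bar{x}$ and a diffeomorphism under which $f$ becomes $f(\bar{x})+u_{1}$, so that $V\cap\lev_{\leq l_i}f$ is the image of a convex set and is therefore connected; alternatively, keep your slab but make it so thin transversally that $f<f(\bar{x})-\tfrac{1}{2}\eta\,\nabla f(\bar{x})^{T}w<l_{i}$ on the \emph{entire} face $F$ for large $i$, which does force $F\cap\lev_{\leq l_i}f=F$.

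The local-maximizer case is worse: continuity of $f$ does not imply that $V\cap\lev_{>l_i}f$ shrinks into a small concentric sub-ball; that requires $\bar{x}$ to be a strict maximizer in a quantitative sense. Take $f(x,y)=-x^{2}$ and $\bar{x}=(0,0)$, a (non-strict) local maximizer: then $\lev_{\leq l}f=\{|x|\geq\sqrt{-l}\}$ has two components for $l<0$, and $a_{i}=(i^{-1/2},0)$, $b_{i}=(-i^{-1/2},0)$, $l_{i}=-1/i$ satisfy every hypothesis of the theorem while $\bar{x}$ is a local maximizer, hence not a saddle point under Definition \ref{def:mountain-pass}. So this branch of the contrapositive cannot be closed as stated; the statement itself evidently needs either the conclusion weakened (to ``$\bar{x}$ is a critical point that is not a local minimizer,'' or ``of mountain pass type'') or a strictness/nondegeneracy hypothesis added. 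You should surface this obstruction explicitly rather than rely on the false containment claim.
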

One difficulty is to decide whether $a_{i}$ and $b_{i}$ are in different
components of $\lev_{\leq l_{i}}f$, but we can use $\nabla f(a_{i})$
and $\nabla f(b_{i})$ to make a guess. Note that provided the limits
exist, $\lim_{i\to\infty}a_{i}=\lim_{i\to\infty}b_{i}$ is equivalent
to $\lim_{i\to\infty}\|a_{i}-b_{i}\|=0$. This principle can be seen
as a convergence property of Algorithms \ref{alg:old-alg} and \ref{alg:new-alg}.
It is therefore pragmatic to decrease the distance or parallel distance
between the components of the level sets, especially at the start
of a global mountain pass algorithm where the quadratic approximation
is not valid yet. The problem of choosing the sequence $\{l_{i}\}_{i=0}^{\infty}$
is much more difficult. The strategy in Algorithm \ref{alg:new-alg}
is adequate for our numerical experiment, but more still needs to
be done.

\section{\label{sec:Convexity-para-dist}Independence of $l$ in estimating
$\nabla^{2}(g_{l,v}^{2})(\cdot)$}

We recall that in our level set algorithm in Section \ref{sec:global-framework},
we perturb the level $l$ using subroutines $(l\uparrow)$ and $(l\downarrow)$
so that $l$ converges to the critical value $f(\bar{x})$ of the
saddle point $\bar{x}$. Such changes in $l$ can be quite sudden.
The Hessian $\nabla^{2}(g_{l,v}^{2})(\cdot)$ is not continuous at
$\bar{x}$ because of the $\frac{1}{v^{T}f(z)}$ in its formula, and
the continuity at an $x$ where $\nabla^{2}(g_{l,v}^{2})(\cdot)$
is only good enough for small changes in $l$. In this section, we
show in Theorem \ref{thm:convexity-smooth} that there is a neighborhood
$U$ of the saddle point $\bar{x}$ such that as long as $l<f(\bar{x})$
is close enough to $f(\bar{x})$ and $v$ is close enough to the eigenspace
corresponding to the negative eigenvalue of $\nabla^{2}(g_{l,v}^{2})(\bar{x})$,
the Hessian $\nabla^{2}(g_{l,v}^{2})(x)$ for $x\in U$ can be estimated
from a quadratic model of $f$ at $\bar{x}$. Such a result shows
that under changes of $l$ near $\bar{x}$, the Hessian $\nabla^{2}(g_{l,v}^{2})(\cdot)$
does not depend too much on $l$, making previous estimates of $\nabla^{2}(g_{l,v}^{2})(\cdot)$
useful for future iterations. As a consequence, we obtain the convexity
of $g_{l,v}(\cdot)^{2}$. 

First, we have the following result that allows us to identify convexity.
\begin{prop}
\label{pro:convex-from-pd}(Convexity from positive definite Hessians)
Suppose $f:\mathbb{R}^{n}\to[0,\infty)$ is a continuous function
that is $\mathcal{C}^{2}$ at all points $x$ satisfying $f(x)>0$,
and the corresponding Hessian $\nabla^{2}f(x)$ is positively semidefinite.
Then $f$ is convex. (The issue here is that the nonsmoothness of
$f$ on the boundary of $\{x\mid f(x)=0\}$ does not affect convexity.)\end{prop}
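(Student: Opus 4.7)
My plan is to reduce the claim to a one-dimensional statement via restriction to line segments, and then handle the one-dimensional statement by a contradiction argument that confines any violation of convexity to the smooth region $\{f>0\}$, where the PSD Hessian hypothesis can be used.

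First, fix $x_{0},x_{1}\in\mathbb{R}^{n}$ and set $\gamma(t):=(1-t)x_{0}+tx_{1}$ and $h(t):=f(\gamma(t))$ for $t\in[0,1]$. Since $f$ is convex if and only if every such $h$ is convex, it suffices to work with $h$. By hypothesis $h$ is continuous and non-negative on $[0,1]$, and on the relatively open set $\{t\in[0,1]:h(t)>0\}$ the chain rule gives $h''(t)=\gamma'(t)^{T}\nabla^{2}f(\gamma(t))\gamma'(t)\ge 0$.

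Suppose for contradiction that $h$ is not convex on $[0,1]$. Then there exist $s_{1}<s_{2}$ in $[0,1]$ and a point $t^{*}\in(s_{1},s_{2})$ such that $h(t^{*})>L(t^{*})$, where $L$ is the affine interpolant with $L(s_{1})=h(s_{1})$ and $L(s_{2})=h(s_{2})$. Set $\tilde h:=h-L$. I would then define
\[
t_{l}:=\sup\{t\in[s_{1},t^{*}]:\tilde h(t)\le 0\},\qquad t_{r}:=\inf\{t\in[t^{*},s_{2}]:\tilde h(t)\le 0\}.
\]
By continuity and the fact that $\tilde h(t^{*})>0$ forces $\tilde h>0$ on a neighborhood of $t^{*}$, we have $t_{l}<t^{*}<t_{r}$, $\tilde h(t_{l})=\tilde h(t_{r})=0$, and $\tilde h>0$ on $(t_{l},t_{r})$.

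The key observation is that $L$ is the linear interpolation between two non-negative values $h(s_{1}),h(s_{2})\ge 0$, so $L\ge 0$ on $[s_{1},s_{2}]$. Hence on $(t_{l},t_{r})$, $h=L+\tilde h>0$, so $(t_{l},t_{r})\subset\{h>0\}$ where $h$ is $\mathcal{C}^{2}$ with $h''\ge 0$. Therefore $h$ is convex on $(t_{l},t_{r})$, and by continuity on $[t_{l},t_{r}]$; since $L$ is affine, $\tilde h$ is also convex on $[t_{l},t_{r}]$. But a convex function vanishing at both endpoints of a closed interval is $\le 0$ on that interval, contradicting $\tilde h(t^{*})>0$. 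The main subtlety, and the one step where the hypothesis $f\ge 0$ is essential, is guaranteeing that the interior interval $(t_{l},t_{r})$ on which $\tilde h>0$ lies inside the smooth region $\{h>0\}$; this is exactly what $L\ge 0$ on $[s_{1},s_{2}]$ delivers, and without non-negativity of $f$ the argument would collapse.
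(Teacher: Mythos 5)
Your proof is correct, and while it opens with the same reduction to one dimension along segments as the paper, the one-dimensional argument is genuinely different. The paper proceeds structurally: it first rules out any bounded component of $\{h>0\}$ whose two endpoints both lie in $\{h=0\}$, concludes that the zero set of the restriction is a single (possibly empty or unbounded) interval $[x_3,x_4]$ with $h$ convex on either side, and then glues the pieces by asserting that the subdifferential is monotone --- a step it leaves as ``an easy exercise.'' You instead argue by direct contradiction on the excess $\tilde h=h-L$ over an arbitrary chord, and the whole proof turns on one clean observation: since $h\ge 0$ at the chord's endpoints, $L\ge 0$ throughout, so the maximal interval $(t_l,t_r)$ on which $\tilde h>0$ is forced into the region $\{h>0\}$, where the PSD Hessian makes $h$ (hence $\tilde h$) convex, and a convex function vanishing at $t_l$ and $t_r$ cannot be positive in between. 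Your route buys a fully self-contained elementary argument with no appeal to subdifferentials or to a classification of the zero set, and it pinpoints exactly where the hypothesis $f\ge 0$ is used; the paper's route is terser on the page (at the cost of the deferred gluing step) and yields as a by-product the structural fact that the zero set of each restriction is an interval. All the technical points in your write-up check out: $\tilde h(t_l)=\tilde h(t_r)=0$ follows from continuity and the definitions of the sup and inf, and the chain rule is legitimate on $\{h>0\}$ because $f>0$ on a neighborhood of $\gamma(t)$ there.
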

\begin{proof}
The usual convexity test $tf(x)+(1-t)f(y)\geq f(tx+(1-t)y)$ for all
$x,y\in\mathbb{R}^{n}$ and $t\in(0,1)$ allows us to reduce the problem
in $\mathbb{R}^{n}$ to that of $n=1$. We first notice that there
cannot exist $x_{1},x_{2}\in\mathbb{R}$ such that $x_{1}<\tilde{x}<x_{2}$,
$f(x_{1})=f(x_{2})=0$, and $f(x)>0$ for all $x\in(x_{1},x_{2})$,
since this is a contradiction to the convexity of $f$ on $(x_{1},x_{2})$. 

Using the above property, we can find $x_{3},x_{4}\in\mathbb{R}\cup\{-\infty,\infty\}$
such that $x_{3}\leq x_{4}$ and 
\[
f(x)\ \begin{cases}
=0 & \mbox{ if }x\in[x_{3},x_{4}]\\
>0 & \mbox{ if }x\notin[x_{3},x_{4}].
\end{cases}
\]
Note that one or both of $x_{3}$ and $x_{4}$ might be $\pm\infty$.
It is an easy exercise that the subdifferential mapping $\partial f$
is monotone, thus $f$ is convex.
\end{proof}
We shall make use of Proposition \ref{pro:convex-from-pd} to establish
the convexity of $g_{l,v}^{2}$ by making sure that the Hessian $\nabla^{2}(g_{l,v}^{2})$
is positive semidefinite whenever $g_{l,v}>0$. 

We make some simplifying assumptions for the rest of this section.
\begin{assumption}
\label{ass:smooth-f}(Smooth $f$) Assume that $f:\mathbb{R}^{n}\to\mathbb{R}$
is a \emph{$\mathcal{C}^{2}$ function with a nondegenerate critical
point $\bar{x}=0$ of Morse index one satisfying $f(0)=0$ such that
$H=\nabla^{2}f(0)$ is diagonal with entries arranged in decreasing
manner as $\lambda_{1},\dots,\lambda_{n}$. This means that the diagonal
entries of $\nabla^{2}f(0)$ consist of $n-1$ positive eigenvalues
and one negative eigenvalue. Let the eigenvector corresponding to
the negative eigenvalue $\lambda_{n}$ be $\bar{v}$. }
\end{assumption}
We also make another definition that will simplify many of the statements
in this section. Denote $\bar{f}_{\delta}:\mathbb{R}^{n}\to\mathbb{R}$
by 
\begin{equation}
\bar{f}_{\delta}(x)=\frac{1}{2}x^{T}[\nabla^{2}f(0)+\delta I]x.\label{eq:bar-f-2}
\end{equation}
Let $\bar{g}_{\delta,l,v}$ be the value of $g_{l,v}$ defined through
the quadratic $\bar{f}_{\delta}(\cdot)$ (instead of through $f(\cdot)$).
 The values $\bar{z}_{\delta}(x)$ and $\bar{z}_{\delta}^{\prime}(x)$,
defined through $\bar{f}_{\delta}$ will be of use later in this section.
 We write $\bar{f}=\bar{f}_{0}$, $\bar{g}_{l,v}=\bar{g}_{0,l,v}$,
$\bar{z}(\cdot)=\bar{z}_{0}(\cdot)$ and $\bar{z}^{\prime}(\cdot)=\bar{z}_{0}^{\prime}(\cdot)$.
We also write $H_{\delta}=\nabla^{2}f(0)+\delta I$ to simplify notation.
\begin{defn}
(Continuity condition) For a function $f:\mathbb{R}^{n}\to\mathbb{R}$
satisfying Assumption \ref{ass:smooth-f}, $\delta>0$, $\gamma>0$
and convex neighborhoods $U_{\delta}$ and $U_{\delta}^{\prime}$
of $0$ such that $U_{\delta}\subset U_{\delta}^{\prime}$, we say
that condition $P(f,\delta,\gamma,U_{\delta},U_{\delta}^{\prime})$
is satisfied if 
\begin{enumerate}
\item $\|\nabla^{2}f(x)-\nabla^{2}f(0)\|\leq\delta$ for all $x\in U_{\delta}^{\prime}$,
\item For $\bar{f}_{\delta}:\mathbb{R}^{n}\to\mathbb{R}$ and $\bar{S}_{\delta,l,v}(\cdot)$
as defined in \eqref{eq:bar-f}, we have $\bar{S}_{\delta,l,v}(x)\subset U_{\delta}^{\prime}$
for all $x\in U_{\delta}$ and $l\in(-\gamma,0]$.
\end{enumerate}
\end{defn}
It is clear through Proposition \ref{pro:Well-defined-g} and the
continuity of the Hessian that for any $\delta>0$, there must be
convex neighborhoods $U_{\delta}$ and $U_{\delta}^{\prime}$ such
that $U_{\delta}\subset U_{\delta}^{\prime}$ and $P(f,\delta,\gamma,U_{\delta},U_{\delta}^{\prime})$
holds. It is also clear that if $P(f,\delta,\gamma,U_{\delta},U_{\delta}^{\prime})$
holds, we have 
\begin{equation}
\frac{1}{2}x^{T}[\nabla^{2}f(0)-\delta I]x\leq f(x)\leq\frac{1}{2}x^{T}[\nabla^{2}f(0)+\delta I]x\mbox{ for all }x\in U_{\delta}.\label{eq:u-and-l-bdd-of-f}
\end{equation}
The next result is a bound on  the error in $z(x)$.
\begin{lem}
\label{lem:control-z(x)}(Controlling $\bar{z}(x)$) Suppose that
$f:\mathbb{R}^{n}\to\mathbb{R}$ is $\mathcal{C}^{2}$ and satisfies
Assumption \ref{ass:smooth-f}. Let $v$ be a unit vector such that
$v^{T}\nabla^{2}f(0)v<0$. For any $\epsilon>0$, there are $\delta>0$,
$\gamma>0$ and convex neighborhoods $U_{\delta}$ and $U_{\delta}^{\prime}$
of $0$ such that $P(f,\delta,\gamma,U_{\delta},U_{\delta}^{\prime})$
holds, and for all $x\in U_{\delta}$ and $l\in(-\gamma,0]$, we have
\begin{eqnarray*}
\|\bar{z}(x)-z(x)\| & \leq & \epsilon\|\bar{z}(x)\|,\\
\|\bar{z}^{\prime}(x)-z^{\prime}(x)\| & \leq & \epsilon\|\bar{z}(x)\|,\\
\mbox{and }|\bar{g}_{l,v}(x)-g_{l,v}(x)| & \leq & \epsilon\|\bar{z}(x)\|.
\end{eqnarray*}
\end{lem}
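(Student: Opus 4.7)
The plan is to compare $z(x)$ with $\bar z(x)$ via a mean-value argument, exploiting both the closeness of $f$ to its quadratic model $\bar f$ and a uniform lower bound on the directional derivative $v^T\nabla\bar f$ near the comparison points. The first move is to fix $\delta>0$ (to be shrunk at the end) and apply Proposition~\ref{pro:Well-defined-g} to both $f$ and the purely quadratic $\bar f$ to obtain $\gamma>0$ and convex neighborhoods $U_\delta\subset U_\delta'$ on which $P(f,\delta,\gamma,U_\delta,U_\delta')$ holds and on which $S_{l,v}(x)$ and $\bar S_{0,l,v}(x)$ are both nonempty line segments for $x\in U_\delta$ and $l\in(-\gamma,0]$; shrinking further lets me also take $\|v-\bar v\|$ as small as needed.

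Writing $z(x)=x+t_1v$ and $\bar z(x)=x+\bar t_1 v$ for the upper endpoints of the respective segments, I use $f(x+t_1v)=l=\bar f(x+\bar t_1 v)$ to obtain
\[
|\bar f(x+t_1 v)-\bar f(x+\bar t_1 v)|
= |\bar f(x+t_1 v)-f(x+t_1 v)|
\le \tfrac{\delta}{2}\|z(x)\|^{2},
\]
the last inequality being the integral-form Taylor estimate based on $\|\nabla^2 f-\nabla^2\bar f\|\le\delta$, together with $f(0)=\bar f(0)=0$ and $\nabla f(0)=\nabla\bar f(0)=0$. The mean value theorem rewrites the left side as $v^T H(x+\tilde t v)(t_1-\bar t_1)$ for some $\tilde t$ between $t_1$ and $\bar t_1$.

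The main technical obstacle, and the step I would work through most carefully, is a lower bound $|v^T H(x+\tilde t v)|\ge c\,\|\bar z(x)\|$ for a constant $c>0$ uniform over the admissible $(x,l,v)$. The intuition is that $\bar f(\bar z(x))=l<0$ together with $v\approx\bar v$ and $x$ small forces $\bar z(x)$ to be approximately a multiple of $\bar v$ of length on the order of $\sqrt{2|l|/|\lambda_n|}$, so that $v^T H\bar z(x)\approx \lambda_n\,\bar v^T\bar z(x)$ has magnitude at least a constant times $|\lambda_n|\,\|\bar z(x)\|$; continuity of $y\mapsto v^T Hy$ then transfers this bound from $\bar z(x)$ to the nearby point $x+\tilde t v$. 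The a priori inclusion $z(x)\in\bar S_{\delta,l,v}(x)\subset U_\delta'$ from Proposition~\ref{pro:Well-defined-g}(3) also yields a self-bootstrap of the form $\|z(x)\|\le C\|\bar z(x)\|$, which is needed to absorb the numerator $\|z(x)\|^2$ against a single factor of $\|\bar z(x)\|$.

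Combining the two displays yields $|t_1-\bar t_1|\le C\delta\,\|\bar z(x)\|$, and taking $\delta$ so small that $C\delta\le\epsilon$ gives the first inequality of the lemma. The identical argument applied at the lower endpoint produces the bound on $\|\bar z'(x)-z'(x)\|$, and the bound on $|g_{l,v}(x)-\bar g_{l,v}(x)|$ follows from the decomposition \eqref{eq:g-diff-formula} and the triangle inequality.
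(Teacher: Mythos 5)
Your route --- a mean-value comparison along the line $x+\mathbb{R}v$, divided by a lower bound on the directional derivative $v^TH(\cdot)$ --- is genuinely different from the paper's. The paper instead sandwiches both $z(x)$ and $\bar z(x)$ inside the segment $[\bar z_{\delta}(x),\bar z_{-\delta}(x)]$ via \eqref{eq:u-and-l-bdd-of-f}, reducing the whole lemma to a comparison of the two exact quadratics $\bar f_{\pm\delta}$, which is then settled by homogeneity (the case $l=0$ on the unit sphere, the case $l<0$ by rescaling to $l=-1/2$ on a compact annulus) together with the Dini-type Proposition \ref{pro:conv-to-zero}. If your version closed, it would be more elementary and would avoid the compactness argument entirely. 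As written, however, it has two gaps, and both sit exactly where the difficulty of the lemma lives: producing a bound \emph{relative} to $\|\bar z(x)\|$, uniformly as $(x,l)\to(0,0)$ where $\|\bar z(x)\|\to 0$.

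First, the transfer of the lower bound $|v^TH\bar z(x)|\ge c\|\bar z(x)\|$ (which is indeed correct at $\bar z(x)$ itself, by the computation behind Lemma \ref{lem:unif-terms}(1)) to the mean-value point $x+\tilde t v$ is circular as stated: $t\mapsto v^TH(x+tv)$ is affine with slope $v^THv<0$, so $|v^TH(x+\tilde t v)|\ge c\|\bar z(x)\|-|v^THv|\,|t_1-\bar t_1|$, and keeping this above $\tfrac{c}{2}\|\bar z(x)\|$ requires the a priori estimate $|t_1-\bar t_1|\lesssim\|\bar z(x)\|$ --- which is, up to constants, the conclusion being proved; if $t_1$ fell on the far side of the vertex of the parabola $t\mapsto\bar f(x+tv)$, the derivative at $\tilde t$ could even vanish. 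This is repairable (a case analysis on which side of the vertex $t_1$ lies, or first invoking the sandwich $\bar S_{-\delta,l,v}(x)\subset S_{l,v}(x)\cap U_{\delta}^{\prime}\subset\bar S_{\delta,l,v}(x)$ to localize $t_1$), but the repair is the actual content and it is missing. Second, the bootstrap $\|z(x)\|\le C\|\bar z(x)\|$ does not follow from the inclusion $z(x)\in\bar S_{\delta,l,v}(x)\subset U_{\delta}^{\prime}$: that inclusion yields only the absolute bound $\|z(x)\|\le\diam(U_{\delta}^{\prime})$, whereas you need a bound proportional to $\|\bar z(x)\|$, a quantity that degenerates to $0$; with only the absolute bound your final estimate reads $|t_1-\bar t_1|\le C\delta/\|\bar z(x)\|$, which is useless near the saddle point. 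A smaller omission: the lemma requires $\|\bar z^{\prime}(x)-z^{\prime}(x)\|\le\epsilon\|\bar z(x)\|$ (not $\epsilon\|\bar z^{\prime}(x)\|$), so ``the identical argument at the lower endpoint'' additionally needs $\|\bar z^{\prime}(x)\|\lesssim\|\bar z(x)\|$, which also requires proof.
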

\begin{proof}
Since $z(x)$ and $\bar{z}(x)$ lie inside the line segment $[\bar{z}_{\delta}(x),\bar{z}_{-\delta}(x)]$,
we have $\|\bar{z}(x)-z(x)\|\leq\|\bar{z}_{\delta}(x)-\bar{z}_{-\delta}(x)\|$.
Since $\bar{z}_{\delta}(x)$, $\bar{z}_{-\delta}(x)$, $\bar{z}_{-\delta}^{\prime}(x)$
and $\bar{z}_{\delta}^{\prime}(x)$ line up in a line (with direction
$v$) in that order, we have 
\begin{eqnarray*}
\|\bar{z}_{\delta}(x)-\bar{z}_{-\delta}(x)\| & < & \|\bar{z}_{\delta}(x)-\bar{z}_{-\delta}(x)\|+\|\bar{z}_{\delta}^{\prime}(x)-\bar{z}_{-\delta}^{\prime}(x)\|\\
 & = & \bar{g}_{\delta,l,v}(x)-\bar{g}_{-\delta,l,v}(x).
\end{eqnarray*}
Similarly,
\begin{eqnarray*}
|\bar{g}_{l,v}(x)-g_{l,v}(x)| & < & \|\bar{z}_{\delta}(x)-\bar{z}_{-\delta}(x)\|+\|\bar{z}_{\delta}^{\prime}(x)-\bar{z}_{-\delta}^{\prime}(x)\|\\
 & = & \bar{g}_{\delta,l,v}(x)-\bar{g}_{-\delta,l,v}(x).
\end{eqnarray*}
Our goal is therefore to prove that for every $\epsilon>0$, we can
find a $\delta>0$ such that $\bar{g}_{\delta,l,v}(x)-\bar{g}_{-\delta,l,v}(x)\leq\epsilon\|\bar{z}(x)\|$
for all $x\in\mathbb{R}^{n}$. 

Note that our problem has now been transformed to a new problem on
an exact quadratic $\bar{f}(\cdot)$. 

The treatment for the case $l=0$ and $l<0$ are different, and we
start off by treating the case $l=0$. 

\textbf{CASE $l=0$: }For a point $x\in\mathbb{R}^{n}$, the sets
$\lev_{\leq0}\bar{f}_{\delta}$ and $\lev_{\leq0}\bar{f}_{-\delta}$
are cones, with $\lev_{\leq0}\bar{f}_{\delta}\subset\lev_{\leq0}\bar{f}_{-\delta}$.
For $\delta>0$ small enough, $\nabla^{2}f(0)$ consists of $n-1$
positive eigenvalues and one negative eigenvalue, so $\lev_{\leq0}\bar{f}_{\delta}$
and $\lev_{\leq0}\bar{f}_{-\delta}$ are both the union of two convex
cones intersecting only at $0$. For a point $x$, the points $\bar{z}_{\delta}(x)$,
$\bar{z}_{-\delta}(x)$, $\bar{z}_{\delta}^{\prime}(x)$ and $\bar{z}_{-\delta}^{\prime}(x)$
can be calculated easily from the quadratic formulas we have seen
in the proof of previous results (in particular, Proposition \ref{pro:quad-formula}),
giving 
\begin{eqnarray*}
\bar{g}_{\delta,l,v}(x) & = & \|\bar{z}_{\delta}(x)-\bar{z}_{\delta}^{\prime}(x)\|=\frac{2\sqrt{[v^{T}H_{\delta}x]^{2}-[v^{T}H_{\delta}v][x^{T}H_{\delta}x]}}{v^{T}H_{\delta}v}\\
\mbox{and }\bar{g}_{-\delta,l,v}(x) & = & \|\bar{z}_{-\delta}(x)-\bar{z}_{-\delta}^{\prime}(x)\|=\frac{2\sqrt{[v^{T}H_{-\delta}x]^{2}-[v^{T}H_{-\delta}v][x^{T}H_{-\delta}x]}}{v^{T}H_{-\delta}v}.
\end{eqnarray*}
Consider the problem 
\[
\max_{x\in\partial\mathbb{B}}h_{\delta}(x),
\]
where $h_{\delta}(x)=\bar{g}_{\delta,l,v}(x)-\bar{g}_{-\delta,l,v}(x)$.
The function $h_{\delta}(\cdot)$ is continuous, and the set $\partial\mathbb{B}:=\{x:\|x\|=1\}$
is compact. The optimization problem above satisfies the conditions
in Proposition \ref{pro:conv-to-zero}, so for any $\epsilon>0$,
we can choose $\delta>0$ such that $\max_{x\in\partial\mathbb{B}}h_{\delta}(x)<\epsilon$.
We have 
\[
h_{\delta}(\bar{z}(x))\leq\|\bar{z}(x)\|\max_{y\in\partial\mathbb{B}}h_{\delta}(y)<\epsilon\|\bar{z}(x)\|.
\]

\textbf{CASE $l<0$:} We can consider the case $l=-1/2$ first. The
other cases follow by a scaling. 

If $\|x\|>1/\sqrt{\delta}$, then $\bar{f}_{2\delta}(x)\leq0$ implies
that $\bar{f}_{\delta}(x)=\bar{f}_{2\delta}(x)-\frac{1}{2}\delta\|x\|^{2}\leq-\frac{1}{2}$.
Also, $\bar{f}_{-\delta}(x)\leq-\frac{1}{2}$ clearly implies $\bar{f}_{-2\delta}(x)\leq\bar{f}_{-\delta}(x)\leq0$.
This gives 
\begin{eqnarray*}
\left[\frac{1}{\sqrt{\delta}}\mathbb{B}\right]^{C}\cap\lev_{\leq0}\bar{f}_{2\delta} & \subset & \left[\frac{1}{\sqrt{\delta}}\mathbb{B}\right]^{C}\cap\lev_{\leq-1/2}\bar{f}_{\delta}\\
 & \subset & \left[\frac{1}{\sqrt{\delta}}\mathbb{B}\right]^{C}\cap\lev_{\leq-1/2}\bar{f}_{-\delta}\\
 & \subset & \left[\frac{1}{\sqrt{\delta}}\mathbb{B}\right]^{C}\cap\lev_{\leq0}\bar{f}_{-2\delta},
\end{eqnarray*}
 where $[\cdot]^{C}$ is the complementation of a set. By the treatment
for the case $l=0$, for any $\epsilon>0$, we can find $\delta_{1}>0$
such that if $\|\bar{z}(x)\|>1/\sqrt{\delta_{1}}$, then $\|\bar{z}_{\delta_{1}}(x)-\bar{z}_{-\delta_{1}}(x)\|<\frac{\epsilon}{2}\|\bar{z}(x)\|$.
Therefore, if $\delta_{2}\in(0,\delta_{1})$, we have $\|\bar{z}_{\delta_{2}}(x)-\bar{z}_{-\delta_{2}}(x)\|<\frac{\epsilon}{2}\|\bar{z}(x)\|$,
which gives 
\[
\bar{g}_{\delta,l,v}(x)-\bar{g}_{-\delta,l,v}(x)\leq\epsilon\|\bar{z}(x)\|.
\]
We still need to treat the case where $\|\bar{z}(x)\|\leq1/\sqrt{\delta_{1}}$.
The condition $\bar{f}(\bar{z}(x))=-\frac{1}{2}$ implies that $\|\bar{z}(x)\|\geq1/\sqrt{-\lambda_{n}+2\delta_{1}}$,
where $\lambda_{n}$ is the negative eigenvalue of $\nabla^{2}f(0)$.
We make use of the same strategy to estimate $\|\bar{z}_{\delta}(x)-\bar{z}_{-\delta}(x)\|$
as in the last case. This time, the formulas give 
\begin{eqnarray*}
\bar{g}_{\delta,l,v}(x)-\bar{g}_{-\delta,l,v}(x) & = & \frac{2\sqrt{[v^{T}H_{\delta}x]^{2}-[v^{T}H_{\delta}v][x^{T}H_{\delta}x-2l]}}{v^{T}H_{\delta}v}\\
 &  & -\frac{2\sqrt{[v^{T}H_{-\delta}x]^{2}-[v^{T}H_{-\delta}v][x^{T}H_{-\delta}x-2l]}}{v^{T}H_{-\delta}v}.
\end{eqnarray*}
We are led to consider the problem
\[
\max_{x\in C}h_{\delta}(x),
\]
where 
\begin{eqnarray*}
h_{\delta}(x) & := & \bar{g}_{\delta,l,v}(x)-\bar{g}_{-\delta,l,v}(x),\\
\mbox{ and }C & = & \big\{ y:1/\sqrt{-\lambda_{n}+2\delta_{1}}\leq\|y\|\leq1/\sqrt{\delta_{1}}\big\}.
\end{eqnarray*}
Once again, $h_{\delta}(\cdot)$ is continuous, $C$ is compact, and
Proposition \ref{pro:conv-to-zero} can be applied. There is some
$\delta_{2}$ such that $0<\delta_{2}<\delta_{1}$ and $\max_{x\in C}h_{\delta_{2}}(x)<\epsilon/\sqrt{-\lambda_{n}+2\delta_{1}}$.
If $\|\bar{z}(x)\|\in[1/\sqrt{-\lambda_{n}+2\delta_{1}},1/\sqrt{\delta_{1}}]$,
then we have 
\[
h_{\delta_{2}}\big(\bar{z}(x)\big)\leq\epsilon/\sqrt{-\lambda_{n}+2\delta_{1}}\leq\epsilon\|\bar{z}(x)\|.
\]
The case where $l$ is another negative number differ from the case
$l=-1/2$ by a scaling. Our claim follows.
\end{proof}
Here is a result that we have used for Lemma \ref{lem:control-z(x)}.
\begin{prop}
\label{pro:conv-to-zero}(Convergence to zero of maximum value) Suppose
that $h_{\delta}:C\to\mathbb{R}$ is continuous for all $\delta\geq0$,
$C$ is compact, and that $\delta_{1}<\delta_{2}$ implies $h_{\delta_{1}}(x)\leq h_{\delta_{2}}(x)$
for all $x\in C$. Assume also that for all $x\in C$, $h_{\delta}(x)\searrow0$
as $\delta\searrow0$. Then $\max_{x\in C}h_{\delta}(x)\searrow0$
as $\delta\searrow0$. \end{prop}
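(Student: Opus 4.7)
The plan is to recognize this as (essentially) Dini's theorem: a monotone family of continuous functions on a compact set that converges pointwise to zero must do so uniformly. I will give a direct proof by contradiction, using compactness of $C$ to extract a convergent subsequence and then exploiting both the monotonicity in $\delta$ and the continuity in $x$.

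First I note that, by the hypothesis $\delta_1 < \delta_2 \Rightarrow h_{\delta_1}(x) \leq h_{\delta_2}(x)$ for all $x \in C$, the function $\delta \mapsto M_\delta := \max_{x \in C} h_\delta(x)$ is nondecreasing in $\delta$, and by compactness of $C$ and continuity of $h_\delta$ the maximum is attained. In particular $\lim_{\delta \searrow 0} M_\delta$ exists in $[0,\infty)$. It therefore suffices to rule out that this limit is a positive number $\epsilon > 0$.

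Suppose for contradiction that there exists $\epsilon > 0$ and a sequence $\delta_k \searrow 0$ with $M_{\delta_k} \geq \epsilon$, and choose $x_k \in C$ with $h_{\delta_k}(x_k) = M_{\delta_k} \geq \epsilon$. By compactness, pass to a subsequence (not relabeled) with $x_k \to x^\star \in C$. Now fix an arbitrary $\delta > 0$. For all $k$ large enough we have $\delta_k \leq \delta$, so monotonicity gives $h_{\delta_k}(x_k) \leq h_\delta(x_k)$, and continuity of $h_\delta$ at $x^\star$ gives $h_\delta(x_k) \to h_\delta(x^\star)$. Hence
\[
\epsilon \;\leq\; \limsup_{k\to\infty} h_{\delta_k}(x_k) \;\leq\; \limsup_{k\to\infty} h_\delta(x_k) \;=\; h_\delta(x^\star).
\]
Letting $\delta \searrow 0$ and using the pointwise hypothesis $h_\delta(x^\star) \searrow 0$ yields $\epsilon \leq 0$, a contradiction.

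The only subtle point is the interaction of the two limits: we cannot directly use continuity in $x$ of a single $h_{\delta_k}$, since $\delta_k$ changes with $k$. The trick that makes everything go through is to first invoke monotonicity to replace $h_{\delta_k}(x_k)$ by $h_\delta(x_k)$ for a \emph{fixed} $\delta$, and only then take $k \to \infty$ using continuity; the arbitrariness of $\delta > 0$ and the pointwise decay at $x^\star$ close the argument. No further assumption (such as equicontinuity) is needed.
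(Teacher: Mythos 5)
Your proof is correct and uses the same essential ingredients as the paper's: extract a convergent subsequence of maximizers by compactness, use monotonicity in $\delta$ to compare against a single fixed $h_{\delta}$, and then invoke continuity of that fixed function at the limit point. The only difference is that you phrase it as a contradiction while the paper argues directly with an $\epsilon$/neighborhood estimate; this is a cosmetic distinction, not a different route.
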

\begin{proof}
For each sequence $\delta_{i}\searrow0$ as $i\nearrow\infty$, there
is a maximizer $\tilde{x}_{i}$ such that $h_{\delta_{i}}(\tilde{x}_{i})=\max_{x\in C}h_{\delta_{i}}(x)$.
It suffices to show that $h_{\delta_{i}}(\tilde{x}_{i})\searrow0$
as $i\nearrow\infty$. Due to the compactness of $C$, we can assume
that there is a subsequence of $\{\tilde{x}_{i}\}$ converging to
some $\tilde{x}\in C$. For any $\epsilon>0$, there is some $a$
such that $h_{\delta_{a}}(\tilde{x})<\epsilon$ and a neighborhood
$U_{\epsilon}$ of $\tilde{x}$ such that $h_{\delta_{a}}(x)<2\epsilon$
for all $x\in U_{\epsilon}$. This means that some tail of the sequence
$\{h_{\delta_{i}}(\tilde{x}_{i})\}_{i=1}^{\infty}$ is less than $2\epsilon$.
Since $\epsilon$ is arbitrary, $\max_{x\in C}h_{\delta_{i}}(x)=h_{\delta_{i}}(\tilde{x}_{i})\searrow0$
as $i\nearrow\infty$ as needed.
\end{proof}
For $x\neq0$, let $u(x)=x/\|x\|$. Here are some bounds we need to
check:
\begin{lem}
\label{lem:unif-terms}(Uniform bounds on terms) Suppose that $\bar{f}:\mathbb{R}^{n}\to\mathbb{R}$
is defined by $\bar{f}(x)=\frac{1}{2}x^{T}\nabla^{2}f(0)x$, where
$f:\mathbb{R}^{n}\to\mathbb{R}$ is a function satisfying Assumption
\ref{ass:smooth-f}. Let $\bar{v}$ be the eigenvector corresponding
to the negative eigenvalue of $\nabla^{2}f(0)$. Assume that $v$
is a unit vector such that $\|v-\bar{v}\|<\alpha$. Let $\bar{z}$
be a point such that $f(\bar{z})\leq0$. We have the following:
\begin{enumerate}
\item $1/|v^{T}u(\nabla\bar{f}(\bar{z}))|<1/\left[\sqrt{\frac{-\lambda_{n}}{\max(\lambda_{1},-\lambda_{n})-\lambda_{n}}}-\alpha\right]$
for all $\bar{z}$ such that $\bar{f}(\bar{z})\leq0$.
\item $\frac{\bar{g}_{l,v}(\bar{z})}{\|\nabla\bar{f}(\bar{z})\|}\leq\frac{2}{|\lambda_{n}|-2\alpha|\lambda_{n}|-\alpha^{2}\max(\lambda_{1},-\lambda_{n})}$
for all $\bar{z}$ satisfying $f(\bar{z})=l$, where $l\leq0$. 
\end{enumerate}
\end{lem}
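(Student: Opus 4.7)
The plan is to treat the two claims separately, both relying on the diagonal structure of $H = \nabla^2 f(0) = \mathrm{diag}(\lambda_1,\ldots,\lambda_n)$, the identification $\bar v = e_n$, and a triangle-inequality perturbation argument using $\|v - \bar v\| < \alpha$. I will read the hypothesis ``$f(\bar z)\le 0$'' as ``$\bar f(\bar z)\le 0$'' (and similarly ``$f(\bar z) = l$'' as ``$\bar f(\bar z) = l$'') in (2), since the lemma is framed entirely in terms of the quadratic $\bar f$ and $\bar g_{l,v}$ is defined through $\bar f$.

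For (1), I split $v^T u(\nabla \bar f(\bar z)) = \bar v^T u(H\bar z) + (v - \bar v)^T u(H\bar z)$ and apply Cauchy--Schwarz to the perturbation term to obtain
\[
|v^T u(\nabla \bar f(\bar z))| \;\geq\; \frac{|\lambda_n \bar z_n|}{\|H\bar z\|} - \alpha.
\]
The substantive task is to lower bound $|\lambda_n \bar z_n|/\|H\bar z\|$ over all $\bar z$ with $\sum_{i}\lambda_i \bar z_i^{\,2} \le 0$, equivalently over all $\bar z$ with $\sum_{i=1}^{n-1}\lambda_i \bar z_i^{\,2}\le |\lambda_n|\bar z_n^{\,2}$. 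After squaring and substituting $y_i := \bar z_i^{\,2}/\bar z_n^{\,2}\ge 0$, this reduces to the linear program ``maximize $\sum_{i=1}^{n-1}(\lambda_i/\lambda_n)^2 y_i$ subject to $\sum_{i=1}^{n-1}\lambda_i y_i \le |\lambda_n|$''. The per-unit-constraint gain $\lambda_i/\lambda_n^2$ is largest at $i=1$, so the optimum places all weight on $y_1 = |\lambda_n|/\lambda_1$, giving the value $\lambda_1/|\lambda_n|$ and hence $|\lambda_n\bar z_n|/\|H\bar z\| \ge \sqrt{-\lambda_n/(\lambda_1-\lambda_n)}$. Since $\lambda_1 \le \max(\lambda_1,-\lambda_n)$, this is at least $\sqrt{-\lambda_n/(\max(\lambda_1,-\lambda_n)-\lambda_n)}$, and combining with the perturbation estimate above yields the claimed bound on $1/|v^T u(\nabla \bar f(\bar z))|$.

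For (2), I substitute $g=0$ and $c=0$ into the formula of Proposition \ref{pro:quad-formula} to get
\[
\bar g_{l,v}(\bar z)^2 = \frac{4}{(v^T H v)^2}\bigl[(v^T H \bar z)^2 - (v^T H v)(\bar z^T H\bar z - 2l)\bigr].
\]
The hypothesis $\bar f(\bar z) = l$ kills the second bracket, collapsing this to $\bar g_{l,v}(\bar z) = 2|v^T H\bar z|/|v^T H v|$. Dividing by $\|\nabla\bar f(\bar z)\| = \|H\bar z\|$ and applying Cauchy--Schwarz in the numerator gives $\bar g_{l,v}(\bar z)/\|\nabla\bar f(\bar z)\| \le 2/|v^T H v|$. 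To lower bound $|v^T H v|$, I expand $v = \bar v + (v - \bar v)$ and use $H\bar v = \lambda_n\bar v$ to write $v^T H v = \lambda_n + 2\lambda_n\bar v^T(v-\bar v) + (v-\bar v)^T H(v-\bar v)$, then apply the triangle inequality together with $|w^T H w|\le \max(\lambda_1,-\lambda_n)\|w\|^2$. This produces $|v^T H v| \ge |\lambda_n| - 2\alpha|\lambda_n| - \alpha^2\max(\lambda_1,-\lambda_n)$ (assuming $\alpha$ is small enough that the right-hand side is positive), which is exactly the denominator in the stated bound.

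The main obstacle is really just bookkeeping, but two points deserve attention. In (1), one has to set up and solve the small LP correctly, recognizing that the extremal configuration uses only the direction of the largest positive eigenvalue $\lambda_1$; this is what produces the slightly unusual denominator $\max(\lambda_1,-\lambda_n) - \lambda_n$. In (2), one must spot the cancellation arising from $\bar f(\bar z) = l$ before attempting any perturbation estimates, as it reduces the problem from a fully quadratic expression to a single ratio. Once these two reductions are in place, no deeper difficulty remains.
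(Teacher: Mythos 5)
Your proof is correct, and part (2) is essentially identical to the paper's: reduce $\bar g_{l,v}(\bar z)$ to $2|v^TH\bar z|/|v^THv|$ via the cancellation from $\bar f(\bar z)=l$, apply Cauchy--Schwarz upstairs, and lower-bound $|v^THv|$ by expanding around $\bar v$. (Your reading of the hypothesis as ``$\bar f(\bar z)\le 0$'' is also the one the paper's proof actually uses.) In part (1) the outer structure is again the same -- split off the $(v-\bar v)$ perturbation and then bound $|\lambda_n\bar z_n|/\|H\bar z\|$ subject to $\sum_i\lambda_i\bar z_i^2\le 0$ -- but your treatment of that core inequality differs: you solve the extremal problem exactly as a fractional-knapsack LP in $y_i=\bar z_i^2/\bar z_n^2$, concentrating all mass on the $\lambda_1$ direction and obtaining the sharper constant $\sqrt{-\lambda_n/(\lambda_1-\lambda_n)}$ before relaxing $\lambda_1$ to $\max(\lambda_1,-\lambda_n)$, whereas the paper proceeds by a direct chain of inequalities (monotonicity of $t\mapsto t/(B+t)$ together with $\lambda_i^2\le\max(\lambda_1,-\lambda_n)\lambda_i$ for $i\le n-1$) and lands immediately on the stated, slightly weaker constant. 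Your variant buys a marginally tighter bound and makes the extremal configuration explicit; the paper's avoids the change of variables (and hence the implicit need to handle $\bar z_n=0$ separately, though that case is degenerate in both arguments since it forces $\nabla\bar f(\bar z)=0$).
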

\begin{proof}
Let $H$ be $\nabla^{2}f(0)$, which we recall is diagonal. We prove
(1) and (2). 

(1) We have 
\begin{eqnarray*}
|v^{T}u(\nabla\bar{f}(\bar{z}(x)))| & \geq & |\bar{v}^{T}u(\nabla\bar{f}(\bar{z}))|-|(v-\bar{v})^{T}u(\nabla\bar{f}(\bar{z}))|\\
 & \geq & \left|\bar{v}^{T}\frac{H\bar{z}}{\|H\bar{z}\|}\right|-\alpha\\
 & = & \left|\frac{\lambda_{n}\bar{z}_{n}}{\sqrt{\sum_{i=1}^{n}\lambda_{i}^{2}\bar{z}_{i}^{2}}}\right|-\alpha.
\end{eqnarray*}
 From the fact that $f(\bar{z})\leq0$, we have 
\begin{eqnarray*}
\sum_{i=1}^{n}\lambda_{i}\bar{z}_{i}^{2} & \leq & 0\\
\Rightarrow\lambda_{n}^{2}\bar{z}_{n}^{2} & \geq & -\lambda_{n}\sum_{i=1}^{n-1}\lambda_{i}\bar{z}_{i}^{2}.
\end{eqnarray*}
Therefore
\begin{eqnarray*}
\left(\frac{\lambda_{n}\bar{z}_{n}}{\sqrt{\sum_{i=1}^{n}\lambda_{i}^{2}\bar{z}_{i}^{2}}}\right)^{2} & = & \frac{\lambda_{n}^{2}\bar{z}_{n}^{2}}{\sum_{i=1}^{n}\lambda_{i}^{2}\bar{z}_{n}^{2}}\\
 & \geq & \frac{-\lambda_{n}\sum_{i=1}^{n-1}\lambda_{i}\bar{z}_{i}^{2}}{\sum_{i=1}^{n-1}\lambda_{i}^{2}\bar{z}_{n}^{2}-\lambda_{n}\sum_{i=1}^{n-1}\lambda_{i}\bar{z}_{i}^{2}}\\
 & \geq & \frac{-\lambda_{n}\sum_{i=1}^{n-1}\lambda_{i}\bar{z}_{i}^{2}}{\max(\lambda_{1},-\lambda_{n})\sum_{i=1}^{n-1}\lambda_{i}\bar{z}_{n}^{2}-\lambda_{n}\sum_{i=1}^{n-1}\lambda_{i}\bar{z}_{i}^{2}}\\
 & = & \frac{-\lambda_{n}}{\max(\lambda_{1},-\lambda_{n})-\lambda_{n}}.
\end{eqnarray*}
The rest of the claim is straightforward.

(2) Through calculations we have seen in the proof of Lemma \ref{lem:control-z(x)},
we have 
\[
\bar{g}_{l,v}(\bar{z})=2\left|\frac{\sqrt{[v^{T}H\bar{z}]^{2}-[v^{T}Hv][\bar{z}^{T}H\bar{z}-2l]}}{v^{T}Hv}\right|=2\left|\frac{v^{T}H\bar{z}}{v^{T}Hv}\right|,
\]
since $l=f(\bar{z})=\frac{1}{2}\bar{z}^{T}H\bar{z}$. Now, 
\begin{eqnarray*}
|v^{T}Hv| & \geq & |\bar{v}^{T}H\bar{v}|-2|(v-\bar{v})^{T}H\bar{v}|-|(v-\bar{v})^{T}H(v-\bar{v})|\\
 & \geq & |\lambda_{n}|-2\alpha|\lambda_{n}|-\alpha^{2}\max(\lambda_{1},-\lambda_{n}).
\end{eqnarray*}
Finally, 
\begin{eqnarray*}
\frac{\bar{g}_{l,v}(\bar{z})}{\|\nabla\bar{f}(\bar{z})\|} & = & \frac{2|v^{T}H\bar{z}|}{\|H\bar{z}\||v^{T}Hv|}\\
 & \leq & \frac{2\|v\|\|H\bar{z}\|}{\|H\bar{z}\|[|\lambda_{n}|-2\alpha|\lambda_{n}|-\alpha^{2}\max(\lambda_{1},-\lambda_{n})]}\\
 & = & \frac{2}{|\lambda_{n}|-2\alpha|\lambda_{n}|-\alpha^{2}\max(\lambda_{1},-\lambda_{n})}.
\end{eqnarray*}

\end{proof}
We will use the following result.
\begin{prop}
\label{pro:product-norms}(Products and norms) Let $A_{i}$ and $\bar{A}_{i}$,
where $i=1,\dots,k$, be matrices such that the products $A_{1}A_{2}\cdots A_{k}$
and $\bar{A}_{1}\bar{A}_{2}\cdots\bar{A}_{k}$ are valid. Then 
\begin{equation}
\|A_{1}A_{2}\cdots A_{k}-\bar{A}_{1}\bar{A}_{2}\cdots\bar{A}_{k}\|\leq\left[\prod_{i=1}^{k}(\|\bar{A}_{i}\|+\|A_{i}-\bar{A}_{i}\|)\right]-\prod_{i=1}^{k}\|\bar{A}_{i}\|.\label{eq:product-norms}
\end{equation}
\end{prop}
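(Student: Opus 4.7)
The plan is to prove the inequality by induction on $k$, using only submultiplicativity of the operator norm and the elementary bound $\|A_i\|\le\|\bar A_i\|+\|A_i-\bar A_i\|$.

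The base case $k=1$ is trivial: both sides of \eqref{eq:product-norms} equal $\|A_1-\bar A_1\|$. For the inductive step, set
\[
P=A_1A_2\cdots A_{k-1},\qquad \bar P=\bar A_1\bar A_2\cdots \bar A_{k-1},
\]
and write the usual one-term telescoping
\[
PA_k-\bar P\bar A_k \;=\; (P-\bar P)A_k \;+\; \bar P(A_k-\bar A_k).
\]
Applying the triangle inequality, submultiplicativity, the bound $\|A_k\|\le\|\bar A_k\|+\|A_k-\bar A_k\|$, the trivial bound $\|\bar P\|\le\prod_{i=1}^{k-1}\|\bar A_i\|$, and the inductive hypothesis applied to $\|P-\bar P\|$ yields, with the abbreviations $B_i:=\|\bar A_i\|+\|A_i-\bar A_i\|$ and $C_i:=\|\bar A_i\|$,
\[
\|PA_k-\bar P\bar A_k\|\;\le\;\Big(\prod_{i=1}^{k-1}B_i-\prod_{i=1}^{k-1}C_i\Big)B_k+\Big(\prod_{i=1}^{k-1}C_i\Big)(B_k-C_k).
\]
Expanding and cancelling the cross term $B_k\prod_{i=1}^{k-1}C_i$ gives exactly $\prod_{i=1}^{k}B_i-\prod_{i=1}^{k}C_i$, which is the right-hand side of \eqref{eq:product-norms}, closing the induction.

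I do not expect a real obstacle: the only thing to watch is making the algebraic cancellation explicit so that the bound comes out as a difference of products rather than as an unwieldy sum. As an alternative to induction, one could instead expand directly via the telescoping identity $A_1\cdots A_k-\bar A_1\cdots\bar A_k=\sum_{j=1}^{k}A_1\cdots A_{j-1}(A_j-\bar A_j)\bar A_{j+1}\cdots\bar A_k$ and then invoke the algebraic identity $\prod_{i=1}^k B_i-\prod_{i=1}^kC_i=\sum_{j=1}^k B_1\cdots B_{j-1}(B_j-C_j)C_{j+1}\cdots C_k$; this produces the same bound in one line, but the inductive route is the cleanest self-contained write-up.
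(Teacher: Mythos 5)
Your proof is correct. The induction closes exactly as you describe: the base case is an identity, the one-term telescoping $PA_k-\bar P\bar A_k=(P-\bar P)A_k+\bar P(A_k-\bar A_k)$ is valid, and the cross terms $B_k\prod_{i<k}C_i$ cancel to leave $\prod_i B_i-\prod_i C_i$ (the only point worth being explicit about is that $\prod_{i<k}B_i-\prod_{i<k}C_i\ge 0$, so that replacing $\|A_k\|$ by $B_k$ preserves the inequality). The paper takes a different, non-inductive route: it writes $A_i=\bar A_i+(A_i-\bar A_i)$ and expands the product $[\bar A_1+(A_1-\bar A_1)]\cdots[\bar A_k+(A_k-\bar A_k)]$ multilinearly into $2^k$ terms; subtracting the all-$\bar A$ term and applying the triangle inequality and submultiplicativity to each of the remaining $2^k-1$ terms gives precisely the expansion of $\prod_i(\|\bar A_i\|+\|A_i-\bar A_i\|)-\prod_i\|\bar A_i\|$. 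That expansion makes the meaning of the right-hand side transparent (it is literally the sum of the norm bounds on all cross terms), and is stated in the paper as a one-line "follows readily" remark; your induction, and the $k$-term telescoping identity you mention as an alternative, are more mechanical but equally elementary and fully rigorous. No gap.
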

\begin{proof}
The formula follows readily from 
\[
A_{1}A_{2}\cdots A_{k}-\bar{A}_{1}\bar{A}_{2}\cdots\bar{A}_{k}=[\bar{A}_{1}+(A_{1}-\bar{A}_{1})]\cdots[\bar{A}_{k}+(A_{k}-\bar{A}_{k})]-\bar{A}_{1}\bar{A}_{2}\cdots\bar{A}_{k}.
\]
\end{proof}
\begin{lem}
\label{lem:unif-diff}(Uniform bounds on differences) Suppose $f:\mathbb{R}^{n}\to\mathbb{R}$
satisfies Assumption \ref{ass:smooth-f}. Assume that $v$ is a unit
vector such that $\|v-\bar{v}\|<\alpha$. For every $\epsilon>0$,
there exists $\delta>0$, $\gamma>0$ and convex neighborhoods $U_{\delta}$
and $U_{\delta}^{\prime}$ of $0$ such that $P(f,\delta,\gamma,U_{\delta},U_{\delta}^{\prime})$
holds, and for all $x\in U_{\delta}$, we have 
\begin{enumerate}
\item $\|u(\nabla\bar{f}(\bar{z}(x)))-u(\nabla f(z(x)))\|<\epsilon$.
\item $\left|\frac{1}{u(\nabla\bar{f}(\bar{z}(x)))^{T}v}-\frac{1}{u(\nabla f(z(x)))^{T}v}\right|<\epsilon$.
\item $\left|\frac{\bar{g}_{l,v}(\bar{z})}{\|\nabla\bar{f}(\bar{z}(x))\|}-\frac{g_{l,v}(z)}{\|\nabla f(z(x))\|}\right|<\epsilon$,
where $f\big(\bar{z}(x)\big)=l$ and $l\in(-\gamma,0]$.
\end{enumerate}
\end{lem}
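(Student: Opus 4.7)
The plan is to derive all three estimates from three ingredients that are uniform in $x\in U_\delta$ and $l\in(-\gamma,0]$: (a) the conclusion of Lemma \ref{lem:control-z(x)}, namely $\|z(x)-\bar z(x)\|\le\epsilon'\|\bar z(x)\|$ and $|g_{l,v}(x)-\bar g_{l,v}(x)|\le\epsilon'\|\bar z(x)\|$ for any prescribed $\epsilon'>0$ (here using $\bar g_{l,v}(\bar z(x))=\bar g_{l,v}(x)$ and $g_{l,v}(z(x))=g_{l,v}(x)$ since both are constant along the line $\{x\}+\mathbb{R}\{v\}$); (b) condition (1) of $P(f,\delta,\gamma,U_\delta,U_\delta')$, which together with $\nabla f(0)=0$ and convexity of $U_\delta'$ yields $\|\nabla f(y)-Hy\|\le\delta\|y\|$ for $y\in U_\delta'$, where $H=\nabla^2 f(0)$; and (c) the invertibility of $H$, which gives two-sided bounds $c_1\|y\|\le\|Hy\|\le c_2\|y\|$ for constants $0<c_1\le c_2$.

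From these I would first extract the single key estimate
\[
\|\nabla f(z(x))-\nabla\bar f(\bar z(x))\| \le \bigl[\delta(1+\epsilon')+c_2\epsilon'\bigr]\,\|\bar z(x)\|,
\]
by writing $\nabla f(z)-\nabla\bar f(\bar z)=[\nabla f(z)-Hz]+H[z-\bar z]$ and applying (a)--(b). Combined with $\|\nabla\bar f(\bar z(x))\|=\|H\bar z(x)\|\ge c_1\|\bar z(x)\|$ from (c), this also forces $\|\nabla f(z(x))\|$ to be of order $\|\bar z(x)\|$ once $\delta$ and $\epsilon'$ are small, so every denominator appearing in (1)--(3) is sandwiched between constant multiples of $\|\bar z(x)\|$.

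With this in hand, (1) follows from the elementary estimate $\|u(a)-u(b)\|\le 2\|a-b\|/\min(\|a\|,\|b\|)$, producing a bound of order $\delta+\epsilon'$ that can be driven below $\epsilon$. For (2), Lemma \ref{lem:unif-terms}(1) gives $|u(\nabla\bar f(\bar z))^T v|\ge m$ for some $m>0$ depending only on $\alpha$ and $H$; shrinking $\delta,\epsilon'$ so that the bound in (1) is less than $m/2$ transfers the same lower bound to $|u(\nabla f(z))^T v|$, and the identity $|1/a-1/b|=|a-b|/|ab|$ converts (1) into (2). For (3) I would split
\[
\frac{\bar g_{l,v}(x)}{\|\nabla\bar f(\bar z(x))\|}-\frac{g_{l,v}(x)}{\|\nabla f(z(x))\|} = \frac{\bar g_{l,v}(x)-g_{l,v}(x)}{\|\nabla\bar f(\bar z(x))\|} + \frac{g_{l,v}(x)}{\|\nabla f(z(x))\|}\cdot\frac{\|\nabla f(z(x))\|-\|\nabla\bar f(\bar z(x))\|}{\|\nabla\bar f(\bar z(x))\|};
\]
the first summand is $O(\epsilon')$ by (a) and the lower bound on $\|\nabla\bar f(\bar z)\|$; the prefactor of the second summand is bounded above because Lemma \ref{lem:unif-terms}(2) controls $\bar g_{l,v}/\|\nabla\bar f(\bar z)\|$ and the previous estimates show $g_{l,v}/\|\nabla f(z)\|$ is close to this ratio; and the final factor is $O(\delta+\epsilon')$ by the key estimate.

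The only real obstacle is keeping every bound properly scaled by $\|\bar z(x)\|$: since $\|\bar z(x)\|\to 0$ as $x\to 0$, any absolute estimate would be useless, and one must verify that each triangle-inequality step preserves the relative scaling. Fortunately Lemma \ref{lem:control-z(x)} is already stated in exactly this relative form, and the manipulations above are chosen so that the factor $\|\bar z(x)\|$ cancels cleanly against the $\Theta(\|\bar z(x)\|)$ gradient norms in each denominator.
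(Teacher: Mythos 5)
Your proposal is correct and follows essentially the same route as the paper: the same key gradient estimate $\|\nabla f(z(x))-\nabla\bar f(\bar z(x))\|=O\bigl((\delta+\epsilon')\|\bar z(x)\|\bigr)$ obtained from Lemma \ref{lem:control-z(x)} plus condition (1) of $P(f,\delta,\gamma,U_\delta,U_\delta')$, the same elementary bound on $\|u(a)-u(b)\|$, the same use of Lemma \ref{lem:unif-terms}(1) and (2) for the uniform lower and upper bounds, and the same careful relative scaling against $\|\bar z(x)\|$. The only cosmetic difference is that in part (3) you use a direct two-term splitting of the difference of products where the paper invokes Proposition \ref{pro:product-norms}; the ingredients and bounds are identical.
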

\begin{proof}
We use Lemma \ref{lem:control-z(x)}, which says that for $\epsilon_{1}>0$,
there are $\delta>0$, $\gamma>0$ and convex neighborhoods $U_{\delta}$
and $U_{\delta}^{\prime}$ of $0$ such that $P(f,\delta,\gamma,U_{\delta},U_{\delta}^{\prime})$
holds, and $\|\bar{z}(x)-z(x)\|\leq\epsilon_{1}\|\bar{z}(x)\|$ for
all $x\in U_{\delta}$. 

(1) We can easily obtain $\|z(x)\|\leq(1+\epsilon_{1})\|\bar{z}(x)\|$.
Let $H=\nabla^{2}f(0)$. Now,
\begin{eqnarray*}
\|\nabla\bar{f}(\bar{z}(x))-\nabla f(z(x))\| & \leq & \|\nabla\bar{f}(\bar{z}(x))-\nabla\bar{f}(z(x))\|+\|\nabla\bar{f}(z(x))-\nabla f(z(x))\|\\
 & \leq & \|H(\bar{z}(x)-z(x))\|+\left\Vert Hz(x)-\int_{0}^{1}\nabla^{2}f(tz(x))dt\cdot z(x)\right\Vert \\
 & \leq & \epsilon_{1}\|H\|\|\bar{z}(x)\|+\|z(x)\|\left\Vert H-\int_{0}^{1}\nabla^{2}f(tz(x))dt\right\Vert \\
 & \leq & \epsilon_{1}\|\bar{z}(x)\|\|H\|+\delta(1+\epsilon_{1})\|\bar{z}\|\\
 & = & \|\bar{z}(x)\|[\epsilon_{1}\|H\|+\delta(1+\epsilon_{1})].
\end{eqnarray*}
Note that the term $[\epsilon_{1}\|H\|+\delta(1+\epsilon_{1})]$ can
be made arbitrarily small. Note that $\frac{\|\nabla\bar{f}(\bar{z}(x))\|}{\|\bar{z}(x)\|}\geq\min(|\lambda_{n-1}|,|\lambda_{n}|)$,
so 
\begin{equation}
\frac{\|\nabla\bar{f}(\bar{z}(x))-\nabla f(z(x))\|}{\|\nabla\bar{f}(\bar{z}(x))\|}\leq\frac{[\epsilon_{1}\|H\|+\delta(1+\epsilon_{1})]}{\min(|\lambda_{n-1}|,|\lambda_{n}|)}.\label{eq:4.8-1}
\end{equation}
Next, for any $w_{1},w_{2}\in\mathbb{R}^{n}\backslash\{0\}$, we have
\begin{eqnarray}
\left\Vert \frac{w_{1}}{\|w_{1}\|}-\frac{w_{2}}{\|w_{2}\|}\right\Vert  & \leq & \left\Vert \frac{w_{1}}{\|w_{1}\|}-\frac{w_{2}}{\|w_{1}\|}\right\Vert +\left\Vert \frac{w_{2}}{\|w_{1}\|}-\frac{w_{2}}{\|w_{2}\|}\right\Vert \nonumber \\
 & \leq & \frac{\|w_{1}-w_{2}\|}{\|w_{1}\|}+\frac{\|w_{2}\|\left|\|w_{2}\|-\|w_{1}\|\right|}{\|w_{1}\|\|w_{2}\|}\nonumber \\
 & \leq & 2\frac{\|w_{1}-w_{2}\|}{\|w_{1}\|}.\label{eq:w1-w2}
\end{eqnarray}
Apply the observation in \eqref{eq:w1-w2} to \eqref{eq:4.8-1} to
get what we need.

(2) Let $M=\left[\sqrt{\frac{-\lambda_{n}}{\max(\lambda_{1},-\lambda_{n})-\lambda_{n}}}-\alpha\right]^{-1}$.
We have from Lemma \ref{lem:unif-terms}(1) that $\frac{1}{|u(\nabla\bar{f}(\bar{z}(x)))^{T}v|}\leq M$.
From (1), for $\epsilon_{1}>0$, there exist $\delta>0$, $\gamma>0$
and convex neighborhoods $U_{\delta}$ and $U_{\delta}^{\prime}$
of $0$ such that $P(f,\delta,\gamma,U_{\delta},U_{\delta}^{\prime})$
holds, and $|u(\nabla\bar{f}(\bar{z}(x)))^{T}v-u(\nabla f(z(x)))^{T}v|<\epsilon_{1}$
for all $x\in U_{\delta}$. First,
\begin{eqnarray*}
|u(\nabla f(z(x)))^{T}v| & \geq & |u(\nabla\bar{f}(\bar{z}(x)))^{T}v|-|u(\nabla\bar{f}(\bar{z}(x)))^{T}v-u(\nabla f(z(x)))^{T}v|\\
 & \geq & \frac{1}{M}-\epsilon_{1}\\
\Rightarrow\frac{1}{|u(\nabla f(z(x)))^{T}v|} & \leq & \frac{M}{1-\epsilon_{1}M}.
\end{eqnarray*}
Next, 
\begin{eqnarray*}
\left|\frac{1}{u(\nabla\bar{f}(\bar{z}(x)))^{T}v}-\frac{1}{u(\nabla f(z(x)))^{T}v}\right| & = & \frac{|v^{T}[u(\nabla\bar{f}(\bar{z}(x)))-u(\nabla f(z(x)))]|}{|u(\nabla\bar{f}(\bar{z}(x)))^{T}v||u(\nabla f(z(x)))^{T}v|}\\
 & \leq & M\frac{M}{1-\epsilon_{1}M}\|u(\nabla\bar{f}(\bar{z}(x)))-u(\nabla f(z(x)))\|\\
 & \leq & \frac{M^{2}}{1-\epsilon_{1}M}\epsilon_{1}.
\end{eqnarray*}
As the RHS converges to zero as $\epsilon_{1}\searrow0$, we are done.

(3) We use Proposition \ref{pro:product-norms} to get 
\begin{eqnarray*}
 &  & \left|\frac{\bar{g}_{l,v}(\bar{z})}{\|\nabla\bar{f}(\bar{z}(x))\|}-\frac{g_{l,v}(z)}{\|\nabla f(z(x))\|}\right|\\
 & \leq & [|\bar{g}_{l,v}(\bar{z})-g_{l,v}(z)|+|\bar{g}_{l,v}(\bar{z})|]\left[\left|\frac{1}{\|\nabla\bar{f}(\bar{z}(x))\|}\right|+\left|\frac{1}{\|\nabla\bar{f}(\bar{z}(x))\|}-\frac{1}{\|\nabla f(z(x))\|}\right|\right]\\
 &  & -\frac{|\bar{g}_{l,v}(\bar{z})|}{\|\nabla\bar{f}(\bar{z}(x))\|}\\
 & = & \left[\frac{|\bar{g}_{l,v}(\bar{z})-g_{l,v}(z)|}{\|\nabla\bar{f}(\bar{z}(x))\|}+\frac{|\bar{g}_{l,v}(\bar{z})|}{\|\nabla\bar{f}(\bar{z}(x))\|}\right]\left[1+\left|1-\frac{\|\nabla\bar{f}(\bar{z}(x))\|}{\|\nabla f(z(x))\|}\right|\right]-\frac{|\bar{g}_{l,v}(\bar{z})|}{\|\nabla\bar{f}(\bar{z}(x))\|}.
\end{eqnarray*}
By Lemma \ref{lem:control-z(x)}, for any $\epsilon_{1}>0$, there
exist $\delta>0$, $\gamma>0$ and convex neighborhoods $U_{\delta}$
and $U_{\delta}^{\prime}$ of $0$ such that $P(f,\delta,\gamma,U_{\delta},U_{\delta}^{\prime})$
holds, and $|\bar{g}_{l,v}(\bar{z})-g_{l,v}(z)|\leq\epsilon_{1}\|\bar{z}(x)\|$
for all $x\in U_{\delta}$. So 
\begin{eqnarray*}
\frac{|\bar{g}_{l,v}(\bar{z})-g_{l,v}(z)|}{\|\nabla\bar{f}(\bar{z}(x))\|} & \leq & \frac{\epsilon_{1}\|\bar{z}(x)\|}{\min(\lambda_{n-1},|\lambda_{n}|)\|\bar{z}(x)\|}\\
 & \leq & \frac{\epsilon_{1}}{\min(\lambda_{n-1},|\lambda_{n}|)}.
\end{eqnarray*}
Next, we may reduce $\delta$ if necessary so that $\|\nabla f(z(x))-\nabla\bar{f}(\bar{z}(x))\|\leq\epsilon_{1}\|\bar{z}(x)\|$for
all $x\in U_{\delta}$ as well. We have 
\begin{eqnarray*}
\left|1-\frac{\|\nabla\bar{f}(\bar{z}(x))\|}{\|\nabla f(z(x))\|}\right| & = & \frac{\left|\|\nabla f(z(x))\|-\|\nabla\bar{f}(\bar{z}(x))\|\right|}{\|\nabla f(z(x))\|}\\
 & \leq & \frac{\|\nabla f(z(x))-\nabla\bar{f}(\bar{z}(x))\|}{\|\nabla f(z(x))\|}\\
 & \leq & \frac{\|\nabla f(z(x))-\nabla\bar{f}(\bar{z}(x))\|}{\|\nabla\bar{f}(\bar{z}(x))\|-\|\nabla\bar{f}(\bar{z}(x))-\nabla f(z(x))\|}\\
 & \leq & \frac{\epsilon_{1}\|\bar{z}(x)\|}{\min(|\lambda_{n}|,|\lambda_{n-1}|)\|\bar{z}(x)\|-\epsilon_{1}\|\bar{z}(x)\|}\\
 & = & \frac{\epsilon_{1}}{\min(|\lambda_{n}|,|\lambda_{n-1}|)-\epsilon_{1}}.
\end{eqnarray*}
The RHS of both previous formulas converge to zero as $\epsilon_{1}\searrow0$,
and $\frac{|\bar{g}_{l,v}(\bar{z})|}{\|\nabla\bar{f}(\bar{z}(x))\|}$
is uniformly bounded by Lemma \ref{lem:unif-terms}(2). We have (3)
as needed.
\end{proof}
We have the following theorem.
\begin{thm}
\label{thm:convexity-smooth}(Hessian behavior and convexity) Let
$f:\mathbb{R}^{n}\to\mathbb{R}$ be such that $f$ is $\mathcal{C}^{2}$
in a neighborhood of a nondegenerate critical point $\bar{x}$ with
Morse index one, and let $\bar{v}$ be the eigenvector corresponding
to the negative eigenvalue of $\nabla^{2}f(\bar{x})$. Let $v$ be
a unit vector such that $\|v-\bar{v}\|=\alpha$ is small. Then for
any $\epsilon>0$, there are $\delta>0$, $\gamma>0$ and convex neighborhoods
$U_{\delta}$ and $U_{\delta}^{\prime}$ satisfying $P(f,\delta,\gamma,U_{\delta},U_{\delta}^{\prime})$
such that 
\begin{itemize}
\item For all $l\in(f(\bar{x})-\gamma,f(\bar{x})]$, $g_{l,v}^{2}:U_{\delta}\to\mathbb{R}$
is convex on $U_{\delta}$ 
\item $\|\nabla^{2}g_{l,v}^{2}(x)-\nabla^{2}\bar{g}_{l,v}^{2}(x)\|\leq\epsilon$
for all $x\in U_{\delta}$ satisfying $g_{l,v}(x)>0$. Here $\nabla^{2}\bar{g}_{l,v}^{2}(x)$
equals \textup{$\frac{8}{(v^{T}Hv)^{2}}[Hvv^{T}H-(v^{T}Hv)H]$, where
$H=\nabla^{2}f(\bar{x})$ by Proposition \ref{pro:quad-formula}. }
\end{itemize}
\end{thm}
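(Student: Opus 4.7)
The plan is to reduce the theorem to a quantitative comparison between the Hessian formula for $g_{l,v}^2$ from Proposition~\ref{pro:Grad-Hess-g-2} and the constant matrix $\nabla^2\bar{g}_{l,v}^2=\tfrac{8}{(v^THv)^2}[Hvv^TH-(v^THv)H]$ from Proposition~\ref{pro:quad-formula}, and then deduce convexity from Proposition~\ref{pro:convex-from-pd}.

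The first step is to rewrite $\nabla^2 g_{l,v}^2(x)$ so that every occurrence of $\nabla f(z)$ is factored as $\|\nabla f(z)\|\,u(\nabla f(z))$ and every ratio $g_{l,v}(x)/(v^T\nabla f(z))$ is factored as $\bigl(g_{l,v}(x)/\|\nabla f(z)\|\bigr)\cdot\bigl(1/u(\nabla f(z))^Tv\bigr)$, and similarly for the primed quantities. After this manipulation both the smooth Hessian and its quadratic analogue are built from exactly three families of building blocks: the unit vectors $u(\nabla f(z)),u(\nabla f(z'))$; the scalars $1/u(\nabla f(z))^Tv$ and $g_{l,v}(x)/\|\nabla f(z)\|$ (and their primed versions); and the Hessians $\nabla^2 f(z),\nabla^2 f(z')$. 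Lemma~\ref{lem:unif-terms} gives uniform upper bounds on the scalar factors, Lemma~\ref{lem:unif-diff} makes their differences with the $\bar f$-counterparts arbitrarily small, and the condition $P(f,\delta,\gamma,U_\delta,U_\delta')$ forces $\|\nabla^2 f(z)-H\|\le\delta$. Applying Proposition~\ref{pro:product-norms} factor by factor to each matrix product in the Hessian formula, and summing the resulting bounds, yields $\|\nabla^2 g_{l,v}^2(x)-\nabla^2\bar g_{l,v}^2\|\le\epsilon$ for every $x\in U_\delta$ with $g_{l,v}(x)>0$, once the parameters driving Lemma~\ref{lem:unif-diff} are taken small enough.

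The convexity claim then follows in two moves. First, I verify that $v$ lies in the kernel of $\nabla^2 g_{l,v}^2(x)$: the identity $\nabla g_{l,v}(x)^T v=-1+1=0$ kills the rank-one part $2\nabla g_{l,v}\nabla g_{l,v}^T$, while $\bigl(I-\tfrac{\nabla f(z)v^T}{v^T\nabla f(z)}\bigr)^{\!T} v=v-v=0$ kills the projection-Hessian part. Second, by Proposition~\ref{pro:quad-formula} the other $n-1$ eigenvalues of $\nabla^2\bar g_{l,v}^2$ are uniformly bounded below by some $\mu>0$ depending only on $H$ and $\alpha$; choosing $\epsilon<\mu$ in the closeness estimate then forces $\nabla^2 g_{l,v}^2(x)\succeq 0$ on $\{x\in U_\delta : g_{l,v}(x)>0\}$. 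Since $g_{l,v}^2$ is continuous on $U_\delta$ by Proposition~\ref{pro:Well-defined-g} and $\mathcal{C}^2$ on that open set by Proposition~\ref{pro:Grad-Hess-g-2}, Proposition~\ref{pro:convex-from-pd} (applied on lines within the convex neighborhood $U_\delta$) delivers convexity of $g_{l,v}^2$.

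The main technical obstacle is uniformity in $x$ and $l$. The factor $g_{l,v}(x)/\|\nabla f(z(x))\|$ is not manifestly bounded since both numerator and denominator may shrink as $x$ approaches the boundary of $\{g_{l,v}>0\}$; this is exactly why Lemma~\ref{lem:unif-terms}(2) and the delicate case analysis in Lemma~\ref{lem:control-z(x)} (splitting $l=0$ from $l<0$, and the latter by whether $\|\bar z(x)\|$ is large or small) were established. Once these are in hand, and Lemma~\ref{lem:unif-diff}(3) invoked, the remainder of the argument is mechanical bookkeeping with Proposition~\ref{pro:product-norms}.
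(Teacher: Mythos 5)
Your proposal is correct and follows essentially the same route as the paper: rewrite $\nabla^{2}(g_{l,v}^{2})$ in terms of the normalized gradients $u(\nabla f(z))$, the scalars $1/(u(\nabla f(z))^{T}v)$ and $g_{l,v}(x)/\|\nabla f(z)\|$, control each factor and each difference via Lemmas \ref{lem:unif-terms} and \ref{lem:unif-diff} together with Proposition \ref{pro:product-norms}, and conclude convexity from Proposition \ref{pro:convex-from-pd}. Your explicit observation that $v$ lies exactly in the kernel of $\nabla^{2}(g_{l,v}^{2})(x)$ --- so that it suffices to compare with $\nabla^{2}\bar{g}_{l,v}^{2}$ on $v^{\perp}$, where the latter is bounded below by some $\mu>0$ --- is a welcome refinement, since the limit matrix is only positive semidefinite and the paper's appeal to making the difference small leaves this point implicit.
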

\begin{proof}
The formulas for the Hessian $\nabla^{2}\bar{g}_{l,v}(x)$ in Proposition
\ref{pro:Grad-Hess-g-2} and Proposition \ref{pro:quad-formula} are
equal. We want to show that for all $\epsilon>0$, there exists $\delta>0$
and a neighborhood $U_{\delta}$ of $x$ such that $\|\nabla^{2}\bar{g}_{l,v}^{2}(x)-\nabla^{2}g_{l,v}^{2}(x)\|<\epsilon$
for all $x\in U_{\delta}$. Without loss of generality, suppose Assumption
\ref{ass:smooth-f} holds. The formulas for $\nabla^{2}g_{l,v}^{2}(x)$
and $\nabla^{2}\bar{g}_{l,v}^{2}(x)$ in Proposition \ref{pro:Grad-Hess-g-2}
can be written as 
\begin{eqnarray*}
\nabla^{2}(g_{l,v}^{2})(x) & = & 2\left(-\frac{u(\nabla f(z))}{u(\nabla f(z))^{T}v}+\frac{u(\nabla f(z^{\prime}))}{u(\nabla f(z^{\prime}))^{T}v}\right)\left(-\frac{u(\nabla f(z))}{u(\nabla f(z))^{T}v}+\frac{u(\nabla f(z^{\prime}))}{u(\nabla f(z^{\prime}))^{T}v}\right)^{T}\\
 &  & -2\frac{g_{l,v}(x)}{\|\nabla f(z)\|}\left(I-\frac{u(\nabla f(z))v^{T}}{v^{T}u(\nabla f(z))}\right)\frac{\nabla^{2}f(z)}{v^{T}u(\nabla f(z))}\left(I-\frac{u(\nabla f(z))v^{T}}{v^{T}u(\nabla f(z))}\right)^{T}\\
 &  & +2\frac{g_{l,v}(x)}{\|\nabla f(z^{\prime})\|}\left(I-\frac{u(\nabla f(z^{\prime}))v^{T}}{v^{T}u(\nabla f(z^{\prime}))}\right)\frac{\nabla^{2}f(z^{\prime})}{v^{T}u(\nabla f(z^{\prime}))}\left(I-\frac{u(\nabla f(z^{\prime}))v^{T}}{v^{T}u(\nabla f(z^{\prime}))}\right)^{T},
\end{eqnarray*}
where $u(x)=x/\|x\|$. These formulas can be rewritten as finite sums
of products of terms of the form $\frac{1}{v^{T}u(\nabla f(z(x)))}$,
$u(\nabla f(z))$, $\nabla^{2}f(z)$, $\frac{g_{l,v}(x)}{\|\nabla f(z)\|}$
and other terms involving $z^{\prime}$. We can establish the positive
definiteness of the $\nabla^{2}g_{l,v}^{2}(x)$ for $x$ close to
$\bar{x}$ by ensuring that $\|\nabla^{2}g_{l,v}^{2}(x)-\nabla^{2}\bar{g}_{l,v}^{2}(x)\|$
goes to zero. This is immediate from Proposition \ref{pro:product-norms}
and Lemmas \ref{lem:unif-terms} and \ref{lem:unif-diff}.

Applying Proposition \ref{pro:convex-from-pd} gives us the result
in hand.\end{proof}
\begin{rem}
(The case $l>f(\bar{x})$) A result similar to Theorem \ref{thm:convexity-smooth}
establishing the convexity of $g_{l,v}^{2}(\cdot)$ for $l>f(\bar{x})$
like in Proposition \ref{pro:quad-formula} would be attractive. But
for $l>f(\bar{x})$, the vectors $\bar{z}(x)$ and $z(x)$ may not
exist at all. Yet another issue to consider is that $g_{l,v}(x)$
may be positive but $\bar{g}_{l,v}(x)$ is zero, or vice versa, making
comparison with $\nabla^{2}g_{l,v}^{2}$ and $\nabla^{2}\bar{g}_{l,v}^{2}$
more difficult. Even if these were not an issue, $\bar{v}^{T}\nabla\bar{f}(\bar{z}(x))$
could be zero or be close to zero for some $x$, resulting in a division
by zero in the formulas in Proposition \ref{pro:Grad-Hess-g-2}. Nevertheless,
the Hessian $\nabla^{2}g_{l,v}^{2}(\cdot)$ is still positive definite
if $v^{T}\nabla f(z(x))$ is sufficiently far from $0$. If it turns
out that the Hessian is positive definite whenever $g_{l,v}(x)>0$,
then one can still use Proposition \ref{pro:convex-from-pd} to establish
convexity. 
\end{rem}

\section{\label{sec:implement}Observations from a numerical experiment }

In this section, we implement a simple version of the mountain pass
algorithm on a two dimensional problem (called the six hump camel
back function in \cite{MM04}) defined by 
\begin{equation}
f(x_{1},x_{2})=(4-2.1x_{1}^{2}+x_{1}^{4}/3)x_{1}^{2}+x_{1}x_{2}+4(x_{2}^{2}-1)x_{2}^{2}.\label{eq:six-hump}
\end{equation}

In our numerical experiments, we only seek to obtain graphical information
from this two dimensional example that the parallel distance is a
good strategy. We calculate the Hessians $\nabla^{2}f(x)$ at each
evaluation. While practical implementations will not calculate the
Hessian, we can study the potential of methods that create second
order models from previous gradient evaluations.   

\begin{figure}
\includegraphics[scale=0.5]{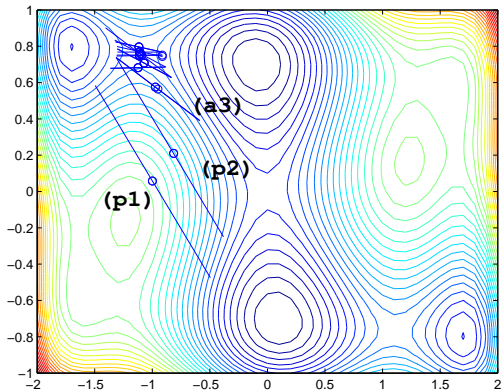}

\caption{\label{fig:sample-run}The first two iterates of Algorithm \ref{alg:new-alg}
are operations (PD) to reduce the parallel distance, marked as (p1)
and (p2), and the third iterate, marked as (a3), is an (Av) operation
to adjust the vector $v$. In this case, the algorithm concentrates
its efforts close to the saddle point near $(-1,0.8)$. If a different
endpoint had been fixed in (a3) instead, the algorithm might have
found the saddle point $(0,0)$. The saddle point $(0,0)$ has a higher
value, and would be of greater interest as the bottleneck.}
\end{figure}

We look at Figure \ref{fig:sample-run}. One observation that can
be made for Algorithm \ref{alg:new-alg} is that while Algorithm \ref{alg:new-alg}
focuses its computations on a saddle point in two runs of (PD) and
one run of (Av), it did not focus its computations on the saddle point
$(0,0)$, which has a higher critical value. We can see this phenomenon
as part of the risks involved in trying to zoom computations to a
saddle point. Moreover, this is unavoidable because in a general problem,
an optimal mountain pass may be difficult to find by any method. Furthermore,
for this example, when the mountain pass algorithm is run between
the saddle point near $(-1,0.8)$ and the local minimizer near $(0.1,-0.7)$,
it may find the saddle point $(0,0)$.

\section{\label{sec:Conclusions}Conclusion}

We propose two Principles (P1) and (P2) that a good mountain pass
algorithm should satisfy. We proposed the subroutine (PD) in Algorithm
\ref{alg:subroutines} to build our global mountain pass algorithm
in Algorithm \ref{alg:new-alg}, making use of the parallel distance
$g_{l,v}(\cdot)$. Through Proposition \ref{pro:quad-formula}, we
see that $g_{l,v}(\cdot)^{2}$ satisfies (P1$^{\prime}$), and that
(P2) follows from work in \cite{mountain}. Sections \ref{sec:Basic-parallel}
and \ref{sec:Convexity-para-dist} discuss how $g_{l,v}(\cdot)^{2}$
satisfies property (P1).

Finally, we envision that a robust mountain pass algorithm should
include quadratic model methods, level set methods and path-based
methods. For example, the points chosen for function and gradient
evaluations in a level set method should be such that they provide
insight for quadratic model methods and path-based methods. The right
blend of these methods allow them to overcome each other's shortcomings.
The evidence from our numerical experiments so far are encouraging.
\begin{acknowledgement*}
 We thank Jiahao Chen for discussions that led to the main ideas
in this paper and for references to the literature on computing saddle
points, and to James Renegar for some helpful discussions.
\end{acknowledgement*}
\bibliographystyle{alpha}
\bibliography{../refs}

\end{document}